\let\oldlabel=\label
\def\prellabel{\marginparsep=1em\marginparwidth=44pt
  \def\label##1{\oldlabel{##1}\ifmmode\else\ifinner\else
         \marginpar{{\footnotesize\ \\ \tt
                    ##1}}\fi\fi}}
\def\NZQ{\mathbb}               % the font for N,Z,Q,R,C
\def\NN{{\NZQ N }}
\def\ZZ{{\NZQ Z}}
\def\opn#1#2{\def#1{\operatorname{#2}}} % to make operators
\opn\chara{char}
\opn\rank{rank}
\opn\hilb{Hilb}
\opn\gr{gr}
\opn\Rees{{\mathcal R}}
\let\dirsum=\oplus
\let\Dirsum=\bigoplus
\newtheorem{theorem}{Theorem}[section]
\newtheorem{lemma}[theorem]{Lemma}
\newtheorem{proposition}[theorem]{Proposition}
\theoremstyle{definition}
\newtheorem{remark}[theorem]{Remark}
\newtheorem{definition}[theorem]{Definition}
\newtheorem{example}[theorem]{Example}
\newtheorem{remark/example}[theorem]{Remark/Example}
\newtheorem*{theorem*}{Theorem}
\let\epsilon=\varepsilon
\let\phi=\varphi
\let\kappa=\varkappa
\opn\ini{in}
\opn\KRS{KRS}
\opn\krs{krs}
\opn\Krs{Krs}
\opn\DEL{DEL}
\opn\diag{diag}
\opn\Ker{Ker}
\opn\Image{Im}
\opn\DD{{\mathcal D}}
\opn\SS{{\mathcal S}}
\opn\MM{{\mathcal M}}
\opn\GL{GL}
\opn{\hht}{ht}
\opn\Cl{Cl}
\opn\cl{cl}
\opn\height{height}
\def\cS{{\mathcal S}}
\def\cI{{\mathcal I}}
\def\cJ{{\mathcal J}}
\def\cB{{\mathcal B}}
\def\mm{{\mathfrak m}}
\let\ra=\rangle
\let\la=\langle
\def\lex{{\textup{lex}}}
\def\str{{\textup{str}}}
\def\revlex{{\textup{revlex}}}
\def\addots{\mathinner{\mkern1mu\raise1pt\hbox{.}\mkern2mu\raise4pt\hbox{.}
        \mkern2mu\raise7pt\vbox{\kern7pt\hbox{.}}\mkern1mu}}
\def\discuss#1{\marginpar{\marginparwidth=2em\raggedright\tiny #1}}
\numberwithin{equation}{section}
\author{Winfried Bruns}
\address{Universit\"at Osnabr\"uck, Institut f\"ur Mathematik, 49069 Osnabr\"uck, Germany}
\email{wbruns@uos.de}
\author{Aldo Conca}
\address{ Dipartimento di Matematica,
Universit\`a degli Studi di Genova, Italy}
\email{conca@dima.unige.it}
\title{Products of Borel fixed ideals of maximal minors}
\keywords{linear resolution, determinantal ideal, toric deformation, Rees algebras}
\subjclass[2010]{13D15, 13F50, 14M12}
\date{}
\begin{document}

\maketitle

\begin{abstract}
We study a large family of products of Borel fixed ideals of maximal minors. We compute their initial ideals and primary decompositions, and show that they have linear free resolutions. The main tools are an extension of straightening law and a very surprising primary decomposition formula. We   study  also the homological properties of  associated multi-Rees algebra which are shown to be  Cohen-Macaulay, Koszul and defined by a Gr\"obner basis of quadrics.
\end{abstract}

%\fontsize{16}{19}\normalfont

\section{Introduction}
Let $K$ be a field and $X=(x_{ij})$ be the $m\times n$ matrix whose entries are the indeterminates of the polynomial ring $R=K[x_{ij}: 1\le i\le m,\ 1\le j \le n]$, and assume that $m\leq n$. The ideals $I_t(X)$, generated by the $t$-minors of $X$, and their varieties are classical objects of commutative algebra, representation theory  and algebraic geometry. They are clearly invariant under the natural action of $\GL_m(K)\times \GL_n(K)$ on $R$. Their arithmetical and homological properties are well-understood as well as their Gr\"obner bases and initial ideals with respect to diagonal (or antidiagonal) monomial  orders, i.e., monomial orders under which the initial monomial of a minor is the product over its diagonal (or antidiagonal); see our survey \cite{BC2}.  Bruns and Vetter \cite{BV} and Miller and Sturmfels \cite{MS} are comprehensive treatments.  

Among the ideals of minors the best-behaved   is  undoubtedly  the ideal of maximal minors, namely the ideal $I_m(X)$. It has  the following important features: 

\begin{theorem}
\label{wellknown}\leavevmode
\begin{enumerate} 
\item The   powers of $I_m(X)$ have a linear resolution. 
\item  They are primary and integrally closed.
\item  Computing initial ideals  commutes with taking powers for diagonal or anti-diagonal monomial orders: $\ini(I_m(X)^k)=\ini(I_m(X))^k$ for all $k$, and the natural generators of $I_m(X)^k$ form a Gr\"obner basis. 
\item The Rees algebra of $I_m(X)$ is Koszul, Cohen-Macaulay and normal. 
\end{enumerate} 
\end{theorem}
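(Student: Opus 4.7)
The plan is to prove item (3) first and then deduce (1), (2), and (4) as consequences. I would fix a diagonal monomial order and show that the natural generators of $I_m(X)^k$, namely products of $k$ maximal minors, form a Gr\"obner basis. The strategy is to establish a straightening-type rewriting on products of maximal minors via Pl\"ucker relations, proving that the standard products of minors form a $K$-basis of $I_m(X)^k$. A Hilbert function comparison then yields $\ini(I_m(X)^k) = \ini(I_m(X))^k$, which is (3) and also shows that the natural generators are a Gr\"obner basis.

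Once (3) is in hand, the initial ideal $J = \ini(I_m(X))$ is the squarefree monomial ideal generated by the main diagonals of the maximal minors. This is a polymatroidal (equivalently, sortable) ideal, so by Herzog--Hibi type theory each $J^k$ has linear quotients and hence a linear resolution. Upper semi-continuity of graded Betti numbers under the flat deformation from $I_m(X)^k$ to $J^k$ then transports the linear resolution property back to $I_m(X)^k$, yielding (1).

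For (2), I would prove that $I_m(X)^k = I_m(X)^{(k)}$, which gives primariness because $I_m(X)$ is itself prime. The cleanest route is to show that the associated graded ring $\gr_{I_m(X)} R$ is a domain, a fact that follows from normality of the Rees algebra or can be obtained directly from an initial-ideal analysis of the Rees algebra presentation. Integral closure of each power is then a consequence of normality of the Rees algebra, reducing (2) to (4).

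Finally for (4), the Rees algebra $\Rees(I_m(X))$ can be understood through its initial algebra: extending the straightening law of (3) to the Rees presentation, I would produce a quadratic Gr\"obner basis for the defining ideal of $\Rees(I_m(X))$, so that the initial algebra is an affine semigroup ring generated by the main-diagonal monomials of the maximal minors. Verifying normality of this semigroup yields normality of $\Rees(I_m(X))$ by toric deformation, Cohen-Macaulayness follows via Hochster's theorem, and Koszulness follows from the quadratic Gr\"obner basis. The main obstacle in the entire plan is the straightening/Gr\"obner basis step for products of maximal minors in (3); once that is secured, statements (1), (2), and (4) fall out by standard transfer principles between an ideal and its diagonal initial ideal.
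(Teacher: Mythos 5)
First, a point of comparison: the paper does not prove Theorem \ref{wellknown} at all --- it is stated as background and attributed to the literature (Eagon--Northcott and \cite{ABW} for (1), and \cite{BC1}, \cite{BC2}, \cite{BCV}, \cite{BV}, \cite{EH} for (2)--(4)). So your proposal is being measured against the standard proofs in those references rather than against an argument in this paper. Your overall architecture --- prove the Gr\"obner/Sagbi statement (3) first via straightening and a Hilbert function comparison, get primariness from $\gr_{I_m(X)}R$ being a domain, integral closure from normality of the Rees algebra, and Koszulness/Cohen--Macaulayness/normality from a quadratic Gr\"obner basis for the Rees presentation plus toric deformation and Hochster's theorem --- is exactly the route taken in that literature and is also the template this paper follows for its generalizations. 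One caveat on (3): the Hilbert function comparison must be carried out for the whole ideal $I_m(X)^k$ in every degree, not just in degree $mk$; this is where a normal form for monomial multiples of standard products (the analogue of the paper's NE-canonical form) is genuinely needed.

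There is, however, one concrete error in your route to (1): the ideal $J=\ini(I_m(X))$, generated by the diagonals $x_{1b_1}\cdots x_{mb_m}$ with $b_1<\dots<b_m$, is \emph{not} polymatroidal. Already for $m=2$, $n=3$ take the generators $u=x_{11}x_{22}$ and $v=x_{12}x_{23}$: the exchange property would require replacing $x_{11}$ in $u$ by a variable of $v$ and landing on another generator, but $x_{12}x_{22}$ and $x_{22}x_{23}$ are not diagonals. (Also, ``polymatroidal'' and ``sortable'' are not equivalent notions; what \emph{is} true here is that the diagonals are a sortable set, via the lattice operations $\wedge,\vee$ of Section 2.) So the Herzog--Hibi linear-quotients argument does not apply as stated, and step (1) has a gap. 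The conclusion you want --- that $J^k$ has a linear resolution, which then transfers to $I_m(X)^k$ by semicontinuity of Betti numbers --- is nevertheless true; to repair the step you can either use the decomposition $J^k=\bigcap_i P_i^k$ into powers of ideals generated by $n-m+1$ variables (Theorem \ref{in-powwer-primary}), or simply invoke Blum's theorem \cite{B} applied to the Koszul Rees algebra you construct in step (4), which yields linear resolutions of all powers directly and makes the polymatroidal detour unnecessary; alternatively (1) follows on the polynomial side from the Eagon--Northcott complex and \cite{ABW} without any initial-ideal argument.
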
 

In the theorem and throughout this article ``resolution'' stands for ``minimal graded free resolution''. The grading is always the standard grading on the polynomial ring.

Concerning the statements in (1), one knows that  $I_m(X)$ itself is resolved by the Eagon-Northcott complex and the resolution for the powers is described  by  Akin, Buchsbaum and Weyman  in \cite{ABW}. References for  assertions (2),  (3) and (4) can be found in \cite{BC1}, \cite{BC2}, \cite{BCV}, \cite{BV}, and Eisenbud and Huneke \cite{EH}.  Note also that the  maximal minors form a universal Gr\"obner basis (i.e., a Gr\"obner basis with respect to every monomial order) as proved by Bernstein, Sturmfels and Zelevinsky in \cite{BZ}, \cite{SZ} and generalized by Conca, De Negri, Gorla \cite{CDG}. But for  $m>2$ and $k>1$ there are  monomial orders $<$ such that  $\ini_<(I_m(X)^k)$ is strictly larger than  $\ini(I_m(X))^k$. In other words, the  natural generators of $I_m(X)^k$ do not form a universal Gr\"obner basis. This is  related to the fact that the maximal minors do not form a universal Sagbi basis for the coordinate ring of the Grassmannian, as observed, for example, by Speyer and Sturmfels \cite[5.6]{SS}. 
 
 For $1<t<m$ the ideal  $I_t(X)$  does not have a linear resolution  and its powers are not primary. The primary decomposition of the powers of $I_t(X)$ has been computed by De Concini, Eisenbud and Procesi \cite{DEP}  and in \cite{BV} . The Castelnuovo-Mumford  regularity of $I_t(X)$ is computed  by Bruns and Herzog \cite{BH}. Furthermore, the formation of initial ideals does not  commute with taking powers,   but $I_t(X)^k$  has a  Gr\"obner basis in degree $tk$ as the results in \cite{BC1} show. 

In our joint work with Berget \cite{BBC},  Theorem \ref{wellknown}   was extended to arbitrary products  of the ideals $I_t(X_t)$ where $X_t$ is the submatrix of the first $t$ rows of $X$.   We proved the following results: 

\begin{theorem}
\label{berget}
Let   $1\leq t_1,\dots, t_w\leq m$ and   $I=I_{t_1}(X_{t_1})\cdots I_{t_w}(X_{t_w})$. 
\begin{enumerate} 
\item Then $I$ has a linear resolution. 
\item  $I$ is integrally closed and it has  a primary decomposition whose components are powers of ideals $I_t(X_t)$ for various values of $t$. 
\item   $\ini(I)=\ini(I_{t_1}(X_{t_1})) \cdots \ini(I_{t_w}(X_{t_w}))$ and the natural  generators of $I$ form a Gr\"obner basis with respect to a diagonal or anti-diagonal monomial order. 
\item The multi-Rees algebra    associated to $I_{1}(X_{1}), \dots,  I_{m}(X_{m})$ is Koszul, Cohen-Macaulay and normal. 
\end{enumerate} 
\end{theorem}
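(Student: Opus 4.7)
My plan is to treat (3) and (4) as the technical core, using a straightening-law / Sagbi-basis argument together with a toric degeneration of the multi-Rees algebra, and then to derive (1) and (2) from the resulting description of $\ini(I)$.

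First, to prove (3), I would extend the classical De Concini--Eisenbud--Procesi straightening law to the situation at hand. The natural generators of $I = I_{t_1}(X_{t_1})\cdots I_{t_w}(X_{t_w})$ are indexed by sequences of column sets $(c_1,\dots,c_w)$ with $|c_i|=t_i$, the $i$-th factor being the minor with column set $c_i$ and row set $\{1,\dots,t_i\}$. I would introduce a notion of ``standard product'' by imposing a chain-type condition on $(c_1,\dots,c_w)$ and prove that these standard products form a $K$-basis of $I$. Since all row sets are initial segments, the diagonal initial monomial of a product of natural generators is the product of the initial monomials of the factors; on standard products this map is injective. A leading-term comparison then yields $\ini(I)=\ini(I_{t_1}(X_{t_1}))\cdots\ini(I_{t_w}(X_{t_w}))$ and identifies the natural generators as a Gr\"obner basis.

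Next, for (4), I would consider the multi-Rees algebra $\cF = R[I_1(X_1)T_1,\dots,I_m(X_m)T_m]$, whose $\ZZ^m$-graded pieces are exactly the products treated in (3). The initial-ideal identity from (3), applied to every such piece, shows that $\cF$ degenerates flatly to the toric subalgebra of $R[T_1,\dots,T_m]$ generated by the initial monomials of the minors $[c\,|\,\{1,\dots,t\}]\,T_t$. For that toric algebra I would exhibit a quadratic Gr\"obner basis via a sorting operation on pairs of columns, giving Koszulness, and describe its lattice points as the integer points of an explicit lattice-path polytope to verify normality; Cohen-Macaulayness then follows from Hochster's theorem. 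Koszulness, normality, and Cohen-Macaulayness all transfer from the toric degeneration to $\cF$ by standard deformation arguments.

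Finally, the initial ideal $\ini(I)$ is a squarefree monomial ideal, a product of ideals of diagonals on the nested row blocks $X_{t_i}$, with a transparent combinatorial structure. For (1) I would show directly that $\ini(I)$ has a linear resolution, for instance by assembling a cellular, Eagon--Northcott-like resolution adapted to the product structure, and then conclude a linear resolution for $I$ by upper-semicontinuity of Betti numbers under the Gr\"obner degeneration. For (2), I would lift the primary decomposition of $\ini(I)$, whose minimal primes are ideals of variables on rectangular row/column blocks, to a primary decomposition of $I$ with components of the form $I_t(X_t)^e$; integral closure of $I$ then follows from the normality of $\cF$ established in (4). The main obstacle is the straightening/Sagbi step of (3): proving that products of minors of mixed shape $(t_1,\dots,t_w)$ admit a standard-basis description on which the diagonal leading-term map is injective and multiplicative. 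This combinatorial heart is what the rest of the theorem rests on.
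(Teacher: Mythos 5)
The paper does not actually prove Theorem~\ref{berget}; it is quoted verbatim from the earlier Berget--Bruns--Conca paper~\cite{BBC}. What the present paper does tell you is that the proof in~\cite{BBC} runs through the classical straightening law, exploiting that the ideals $I_t(X_t)$ have $K$-bases of standard bitableaux, and that the key technical statement is the special case of Theorem~\ref{primary} where all $a_i$ coincide~(\cite[Corollary~2.3 and Theorem~3.3]{BBC}). Your plan for part~(3) --- a standard-monomial basis for the product, injective leading-term map on standard products, leading-term comparison --- is squarely in that tradition, and your plan for part~(4) --- a Sagbi/toric degeneration of the multi-Rees algebra with a quadratic sorting Gr\"obner basis, then Hochster plus transfer --- is the same strategy the paper implements in Theorems~\ref{Rees-1} and~\ref{Rees-2} via $\ini(\Rees)=\Rees_\ini$. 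So in broad outline your route matches the intended one.

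Two genuine concerns. First, for part~(1) you propose to build a cellular, Eagon--Northcott-like resolution of $\ini(I)$ directly and then transfer. That is not a step you can take for granted: $\ini(I)$ is a product of several ideals of diagonals in nested ladders, and there is no known off-the-shelf cellular linear resolution of such a product, nor any structural reason to expect an Eagon--Northcott shape (which resolves a single $I_m(X)$, not a product). The standard way around this --- which you already set yourself up for in part~(4) --- is Blum's theorem~\cite{B}: Koszulness of the multi-Rees algebra (or its initial toric degeneration) forces all coefficient ideals to have linear resolutions, with no combinatorial construction needed. You should use that. Second, the phrase ``lift the primary decomposition of $\ini(I)$ to~$I$'' glosses over the fact that $\ini$ does not commute with intersections in general. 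As the proof of Theorem~\ref{primary} shows, the actual argument is a cycle of inclusions: one has $\prod J_{t_i}\subset \ini(I)\subset \ini(\bigcap I_u^{e_u})\subset\bigcap J_u^{e_u}$ essentially for free, but the closing inclusion $\bigcap_{u} J_u^{e_u}\subset\prod J_{t_i}$ requires a nontrivial monomial argument (the analogues of Lemmas~\ref{easy} and~\ref{harder}). Your proposal does not indicate how this inclusion would be obtained. On the other hand, deriving integral closure of $I$ from normality of the Rees algebra is a legitimate alternative to the paper's symbolic-power argument.
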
 

Note that the ideals $I_t(X_t)$ are fixed by the natural action of the subgroup $B_m(K)\times \GL_n(K)$ of $\GL_m(K)\times \GL_n(K)$, where $B_m(K)$ denotes the subgroup of lower  triangular matrices. For use below we denote the subgroup of upper triangular matrices in $\GL_n(K)$ by $B'_n(K)$ .

Ideals of minors that are invariant under the Borel group $B_m(K)\times B'_n(K)$ have been  introduced and studied by Fulton in  \cite{F}. They  come up  in the study of singularities of various kinds of Schubert subvarieties of the Grassmannians and flag varieties. 
Those that arise as  Borel orbit closures of (partial) permutation matrices are called Schubert determinantal ideals by  Knutson and Miller in the their beautiful paper \cite{KM}  where they describe the associated Gr\"obner bases, as well as Schubert and Grothendieck polynomials. 

The goal of this paper is to extend the results of  Theorems \ref{wellknown} and \ref{berget} to a class of ideals that are fixed by the Borel group. Depending on whether one takes upper or lower triangular matrices on the left or on the right, one ends up with different ``orientations", in the sense that for $B_m(K)\times B'_n(K)$ one gets ideals of minors that flock the northwest corner of the matrix while for $B_m(K)\times B_n(K)$ the ideals of minors flock the northeast corner, and so on.  Of course, there is no real difference between the four cases, but because we prefer to work with diagonal monomial orders,  we will choose  the northeast orientation. Clearly, all the results we prove can be formulated in terms of the other three orientations as well. 

Let us define the \emph{northeast ideals $I_t(a)$ of maximal minors}: $I_t(a)$ is generated by the $t$-minors of the $t\times (n-a+1)$ \emph{northeast submatrix} 
$$
X_t(a)=(x_{ij}: 1\le i \le t,\ a\le j \le n).
$$
The main results can be summed up as follows:

\begin{theorem} 
\label{main} 
Let $I_{t_1}(a_1),\dots, I_{t_w}(a_w)$ be northeast ideals  of maximal minors, and let $I$ be  their product. Then
\begin{enumerate}
\item $I$ has a linear resolution.
\item $\ini(I)=\ini(I_{t_1}(a_1)) \cdots    \ini(I_{t_w}(a_w))$,  and the natural generators of $I$ form a Gr\"obner basis with respect to a diagonal monomial order. 
\item $I$ is integrally closed, and it has a  primary decomposition  whose components are powers of ideals $I_t(a)$ for various values of $t$ and $a$.   
\item The multi-Rees algebra    associated to  the family of ideals $I_{t}(a)$ with $t,a>0$ and $t+a\leq n+1$ is Koszul, Cohen-Macaulay and normal. 
\end{enumerate}
\end{theorem}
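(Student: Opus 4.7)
The plan is to prove the four parts of Theorem \ref{main} in the order (2), (1), (3), (4), with each statement providing technical input for the next. The cornerstone is an extension of the straightening law adapted to products of northeast minors.

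First I would establish (2). The classical straightening law writes a product of minors as a $K$-linear combination of standard bitableaux; the extension I seek should, when restricted to a product of northeast ideals, yield a basis of ``standard northeast bitableaux'' whose diagonal initial monomials are distinct and controlled by the shapes $(t_i,a_i)$. The inclusion $\ini(I) \supseteq \prod_i \ini(I_{t_i}(a_i))$ is automatic; the reverse requires showing that every element of $I$ reduces, modulo the natural generators, to a linear combination of standard northeast bitableaux, each of whose diagonal initial monomial lies in the product on the right. A KRS-style bijection between standard northeast bitableaux and monomials of the correct shape and support would clinch equality. Both the equality of initial ideals and the Gröbner basis claim then follow.

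From (2) the linear resolution in (1) follows by the principle that $I$ and $\ini(I)$ share the same Hilbert function and $\mathop{\rm reg}(I) \leq \mathop{\rm reg}(\ini(I))$. Since $\ini(I) = \prod_i \ini(I_{t_i}(a_i))$ is a product of squarefree monomial ideals, each generated by the products along diagonals of a northeast submatrix, the target is to exhibit a direct linear resolution of this monomial product. I would try polarization combined with the Eagon–Northcott resolution of a single factor, or an inductive mapping-cone argument peeling off one factor at a time. Since $I$ is generated in a single degree, matching its regularity to that degree finishes (1).

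For (3) I expect the ``surprising primary decomposition formula'' to take the form
\[
I \;=\; \bigcap_{(t,a)} I_t(a)^{e_{t,a}}
\]
with the exponents $e_{t,a}$ determined explicitly by the multiset $\{(t_i,a_i)\}$. The inclusion from left to right should be a direct check on the natural generators. For the reverse I would compare diagonal initial ideals: by (2) combined with Theorem \ref{wellknown}(3), the initial ideal of $I$ is the product of the $\ini(I_{t_i}(a_i))$, whereas the initial ideal of the intersection is sandwiched between $\ini(I)$ and $\bigcap_{(t,a)} \ini(I_t(a))^{e_{t,a}}$. The combinatorial surprise is that these two monomial ideals coincide; once this is verified, all containments collapse to equalities. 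Integral closedness then follows because each $I_t(a)^{e_{t,a}}$ is integrally closed by Theorem \ref{wellknown}(2), and intersections of integrally closed ideals are integrally closed.

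For (4) I would present the multi-Rees algebra of $\{I_t(a):t+a\le n+1\}$ by its natural generators and show that its defining ideal admits a quadratic Gröbner basis, immediately giving Koszulness. The initial algebra should be a normal affine semigroup ring whose combinatorics is again controlled by the northeast poset, from which Cohen-Macaulayness and normality of the multi-Rees algebra descend via the usual toric deformation argument. The principal obstacles in the whole program are, in order of expected difficulty, the primary decomposition formula in (3), whose very existence and correct form is the unexpected discovery, and the quadratic Gröbner basis for the multi-Rees presentation in (4), where the overlapping combinatorics of the family $\{X_t(a)\}$ must be tamed simultaneously.
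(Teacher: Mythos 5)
Your overall architecture for (2), (3) and (4) matches the paper's: the sandwich of inclusions
$$
\prod_i \ini(I_{t_i}(a_i)) \subset \ini(I) \subset \ini\Bigl(\bigcap_{t,a} I_t(a)^{e_{ta}}\Bigr) \subset \bigcap_{t,a}\ini\bigl(I_t(a)^{e_{ta}}\bigr)=\bigcap_{t,a}\ini(I_t(a))^{e_{ta}},
$$
closed up by the single monomial-level inclusion $\bigcap_{t,a}\ini(I_t(a))^{e_{ta}}\subset \prod_i \ini(I_{t_i}(a_i))$, is exactly how the paper proves (2) and (3) simultaneously (Theorem \ref{primary}), and your plan for (4) --- quadratic Gr\"obner basis for the presentation ideal plus toric deformation of the initial algebra --- is the paper's Theorems \ref{Rees-1} and \ref{Rees-2}. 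However, there are two genuine gaps. First, the normal form you invoke does not exist in the form you describe: the classical straightening of a product of NE minors does \emph{not} stay inside the product of NE ideals (e.g.\ $[1\,4][2\,3\,4]=[1\,3\,4][2\,4]-[1\,2\,4][3\,4]$ leaves the pattern $((1,2),(3,3))$), so there is no basis of ``standard northeast bitableaux'' in the straightening-law sense. The paper has to replace standardness by the \emph{NE-canonical} form, defined by successively extracting the lexicographically \emph{smallest} diagonal of each prescribed type; and the crucial inclusion above is not a KRS-style bijection but a concrete surgery on monomials (Lemmas \ref{easy} and \ref{harder}): one divides out the lex-smallest $t_w$-diagonal and must show the fragments of the destroyed $u$-diagonals can be reassembled into $k-1$ new ones. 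That combinatorial lemma is the real content and is absent from your plan.

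Second, your route to (1) would fail as stated. Passing to $\ini(I)$ via $\operatorname{reg}(I)\le\operatorname{reg}(\ini(I))$ is fine, but neither polarization with Eagon--Northcott nor a mapping-cone peeling of one factor at a time is known to produce a linear resolution of the monomial product --- products of ideals with linear resolutions need not have linear resolutions, and nothing in your setup controls the kernels in the mapping cones. The paper does not resolve $\ini(I)$ (or $I$) directly at all: it shows the defining ideal of the multi-Rees algebra of the \emph{whole family} $\{I_t(a)\}$ has a Gr\"obner basis of quadrics with respect to a carefully built order (using the NE-canonical form and Lemma \ref{lem4} to certify that non-canonical monomials are detected by quadratic witnesses), concludes the multi-Rees algebra is Koszul, and then invokes Blum's theorem \cite{B} to get linear resolutions of all the coefficient ideals $I_S$ at once. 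So (1) is a \emph{consequence} of (4), not an independent computation; you should reverse the logical order of these two parts and supply the quadratic Gr\"obner basis argument, which again rests on the canonical form you have not constructed.
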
 

Statements  (1), (3) and (4) hold analogously for the initial ideals, in particular the primary components of $\ini(I)$  can be taken to be powers of ideals of variables. 

One could consider a more general definition of northeast ideals  of maximal minors, allowing also more rows than columns. 
Unfortunately the results of Theorem \ref{main} do not hold in this generality.  Let $I'_t(a)$ denote the ideal of the $t$-minors in the submatrix  formed by the last $t$ columns and first $a$ rows (with $a>t$). For example,  one can check in a $3\times 3$ matrix   that the product of (Borel-fixed ideals of maximal minors) $I_1(2)I_2(1)I'_1(2)I'_2(3)$ does not have a linear resolution.

The proofs of the results of \cite{BBC} are  based on the straightening law since the ideals considered in \cite{BBC} have $K$-bases of standard bitableaux. This is no longer true for the ideals $I_t(a)$ in general, let alone for products of such ideals. Therefore we had to develop a more general notion of ``normal form'' that we call \emph{northeast canonical}. Using this type of normal form we will prove the crucial description of the initial ideal $\ini(I)$ as an intersection of powers of the ideals $\ini(I_t(a))$.

The northeast canonical form allows us to prove that the multi-Rees algebra defined by all ideals $I_t(a)$ is a normal domain and is defined by   a Gr\"obner basis of quadrics. A theorem of Blum \cite{B} then implies that all our ideals have linear fee resolutions. The same statements have counterparts for the initial ideals and their multi-Rees algebra as well.

We conclude the paper with a discussion of the Gorenstein property of certain  multigraded Rees rings and  the factoriality of certain fiber rings that come up in connection with the northeast ideals. In particular, we prove that the multigraded Rees algebra associated to a strictly ascending chain   
of ideals $J_1\subset J_2 \dots \subset J_v$ is Gorenstein, provided  each $J_i$ belongs to the family of the $I_t(a)$ and has height $i$.

The results of this paper originated from extensive computations with the systems CoCoA \cite{Cocoa}, Macaulay 2 \cite{M2}, Normaliz \cite{Nmz} and Singular \cite{DGPS}.
 
\section{Minors, diagonals and the straightening law}

Let $K$ be a field and $X=(x_{ij})$ an $m\times n$ matrix of indeterminates. The ideals we want to investigate live in  
$$
R=K[x_{ij}: 1\le i\le m,\ 1\le j\le n].
$$ 
Let $X_t(a)$ be the submatrix of $X$ that consists of the entries $x_{ij}$ with $1\le i\le t$ and $a\le j\le n$. We call it a \emph{northeast submatrix} since it sits in the right upper corner of $X$. The ideal
$$
I_t(a)=I_t(X_t(a))
$$
is called a \emph{northeast ideal of maximal minors}, or a \emph{northeast ideal} for short. In the following ``northeast'' will be abbreviated by ``NE''. Since $I_t(a)=0$ if $t+a>n+1$, we will always assume that $t+a\le n+1$.

We fix a monomial order on $R$ that fits the NE ideals very well: the lexicographic order $>_\lex$ (or simply $>$) in which $x_{11}$ is the largest indeterminate, followed by the elements in the first row of $X$, then the elements in the second row from left to right, etc. More formally:
$$
x_{ij}>_\lex x_{uv} \quad\text{if}\quad i<u \quad\text{or}\quad i=u \text{ and } j< v.
$$

The minor
$$
\delta=\det(x_{ib_j}:i,j=1,\dots,t),\qquad b_1< \dots<  b_t,
$$
is denoted by $[b_1 \dots b_t]$. The \emph{shape} $|\delta|$ is the number $t$ of rows.  The initial monomial of $\delta$  is the diagonal
$$
\la b_1 \dots b_t\ra =x_{1b_1}\cdots x_{tb_t}.
$$
Therefore $<$ is a \emph{diagonal} monomial order. All our theorems remain valid for an arbitrary diagonal monomial order $\prec$ since we will see that for our ideals $I$ the initial ideals $\ini_{<_\lex}(I)$ are generated by initial monomials of products of minors. Therefore $\ini_{<_\lex}(I)\subset \ini_\prec(I)$, and the inclusion implies equality. In view of this observation we will suppress the reference to the monomial order in denoting initial \emph{ideals}, always assuming that the monomial order is diagonal. However, when we compare single \emph{monomials}, the lexicographic order introduced above will be used. 

In the straightening law, Theorem \ref{straight}, we need a partial order for the minors and also for their initial monomials:
\begin{align*}
[b_1 \dots b_t]\le_\str [c_1 \dots c_u] &\iff  \la b_1 \dots b_t\ra\le_\str \la c_1 \dots c_u\ra\\
&\iff t\ge u\text{ and } b_i\le c_i,\ i=1,\dots,u.
\end{align*}
It is easy to see that the minors as well as their initial monomials form a lattice with the meet and join operations defined as follows: if $t\ge u$,
\begin{align*}	
[b_1 \dots b_t]\vee [c_1 \dots c_u]&= [c_1 \dots c_u]\vee [b_1 \dots b_t]=[\max(b_1,c_1),\dots,\max(b_u,c_u)],\\
[b_1 \dots b_t]\wedge [c_1 \dots c_u]&= [c_1 \dots c_u]\wedge [b_1 \dots b_t] = [\min(b_1,c_1),\dots,\min(b_u,c_u),b_{t+1},\dots,b_t].
\end{align*}
The meet and join of two diagonals are defined in the same way: just replace $[\cdots]$ by $\la\cdots\ra$.

A product
$$
\Delta=\delta_1\cdots \delta_p,\quad \delta_i=[b_{i1}\dots b_{it_i}],\ i=1,\dots,p,
$$
of minors is called a \emph{tableau}. The \emph{shape} of $\Delta$ is the $p$-tuple $|\Delta|=(t_1,\dots,t_p)$, provided $t_1\ge\dots \ge  t_p$, a condition that does not restrict us in any way.

If
$$ 
\delta_1\le_\str\dots \le_\str \delta_p
$$
then we say that $\Delta$ is a \emph{standard tableau}. In the context of determinantal ideals one usually has to deal with bitableaux, but in this paper the row indices are always fixed so that we only need to take care of the column indices. Since the product does not determine the order of its factors, one should distinguish the sequence of minors from the product if one wants to be formally correct; as usually, we tacitly assume that such products come with an order of their factors.

\begin{proposition}
\label{uni-init}
Let $\Delta$ be a tableau. Then there exists a unique standard tableau $\Sigma$ such that $\ini(\Delta)=\ini(\Sigma)$. Furthermore $\Delta$  and 
$\Sigma$ have the same shape. 
\end{proposition}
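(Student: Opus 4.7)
The plan is to construct the desired $\Sigma$ by sorting $\Delta$ row-by-row. Write $\Delta=\delta_1\cdots\delta_p$ with $\delta_k=[b_{k1}\dots b_{kt_k}]$ and, after reordering the factors (which changes neither the product nor $\ini(\Delta)$), assume $t_1\ge t_2\ge\cdots\ge t_p$. For each row index $j$ set $m_j=\#\{k:t_k\ge j\}$ and let $B_j$ be the multiset $\{b_{kj}:t_k\ge j\}$ of column indices occurring in row $j$ of $\Delta$. Sort each $B_j$ in non-decreasing order as $c_{1j}\le c_{2j}\le\cdots\le c_{m_j,j}$, define $\sigma_k=[c_{k1}\dots c_{kt_k}]$, and put $\Sigma=\sigma_1\cdots\sigma_p$.

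The crux of the proof, and the step I expect to be the main obstacle, is to show that each $\sigma_k$ is a legitimate minor, i.e.\ that $c_{kj}<c_{k,j+1}$ whenever $j+1\le t_k$. The idea is to count: the $k$ smallest entries of $B_{j+1}$ come from $k$ factors $\delta_{l_1},\dots,\delta_{l_k}$ with $t_{l_i}\ge j+1$, and strict column increase inside each $\delta_{l_i}$ gives $b_{l_i,j}<b_{l_i,j+1}\le c_{k,j+1}$. This exhibits $k$ elements of $B_j$ that are strictly below $c_{k,j+1}$, so the $k$-th smallest entry of $B_j$ satisfies $c_{kj}<c_{k,j+1}$, as required. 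This pigeonhole-style argument uses both the strict column-increase inside the original minors and the reordering $t_1\ge\cdots\ge t_p$.

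Once $\Sigma$ is a well-defined tableau, the remaining checks are formal. Standardness $\sigma_k\le_\str\sigma_{k+1}$ follows because $t_k\ge t_{k+1}$ holds by the reordering, while $c_{kj}\le c_{k+1,j}$ for $j\le t_{k+1}$ is built into the sorting (note that $t_{k+1}\ge j$ forces $k+1\le m_j$). The multiset of shapes is preserved, so $|\Sigma|=|\Delta|$; and the multiset of column indices in each row is preserved, so $\ini(\Sigma)=\prod_{j,k}x_{j,c_{kj}}=\prod_{j,k}x_{j,b_{kj}}=\ini(\Delta)$.

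For uniqueness, suppose $\Sigma'$ is any standard tableau with $\ini(\Sigma')=\ini(\Delta)$. The initial monomial records exactly the multiset $B_j$ for each row $j$, so $m_j$ (the total degree in row-$j$ variables) and hence the entire shape $|\Sigma'|=(t_1,\dots,t_p)$ are forced. Within each row $j$, the $\str$-increasing condition on the factors of $\Sigma'$ forces its row-$j$ entries to appear in non-decreasing order across the factors; together with the fact that they must form the multiset $B_j$, this pins them down as precisely the $c_{kj}$ constructed above. Hence $\Sigma'=\Sigma$, proving uniqueness.
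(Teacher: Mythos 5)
Your proof is correct, but it takes a genuinely different route from the paper's. The paper argues by iterated local exchanges: whenever two factors $\delta_i$, $\delta_j$ are incomparable it replaces the pair by $(\delta_i\wedge\delta_j,\ \delta_i\vee\delta_j)$, which preserves the initial monomial and the multiset of shapes because meet and join merely redistribute each row's pair of column indices as $(\min,\max)$; after finitely many such operations one reaches a standard tableau, and uniqueness is declared evident. You instead construct $\Sigma$ in one shot by sorting the multiset $B_j$ of column indices in each row, and your pigeonhole count (the $k$ smallest entries of $B_{j+1}$ force at least $k$ strictly smaller entries of $B_j$, hence $c_{kj}<c_{k,j+1}$) is precisely the verification that the sorted arrays reassemble into legitimate minors --- a point the paper gets for free from the lattice operations. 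The trade-off: your version makes uniqueness fully explicit (the initial monomial determines each $B_j$, hence the shape, and standardness forces the sorted order within each row), and it avoids the termination argument that the paper's exchange process implicitly needs; the paper's version is shorter and ties directly into the $\wedge,\vee$ formalism it sets up anyway for the straightening law, Theorem \ref{straight}. Both are valid proofs of the same statement.
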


This is easy to see: if $\Delta=\delta_1\cdots \delta_p$ is not standard, then there exist $i$ and $j$ such that $\delta_i$ and $\delta_j$ are not comparable. Since $\ini(\delta_i\delta_j)=\ini((\delta_i\wedge\delta_j)(\delta_i\vee\delta_j))$ we can replace $\delta_i\delta_j$ by an ordered pair of factors, and after finitely many such operations we reach a standard tableau. It is evidently unique.

That the row indices are not only fixed, but always given by $1,\dots,t$ for a $t$-minor simplifies the \emph{straightening law}. 

\begin{theorem}\label{straight}\leavevmode
\begin{enumerate}
\item Let $\delta=[b_1\dots b_t]$ and $\sigma=[c_1\dots c_u]$ be minors. Then there exist uniquely determined minors $\eta_i,\zeta_i$ and coefficients $\lambda_i\in K$, $i=1,\dots,q$, $q\ge 0$, such that
\begin{multline*}
\delta\sigma=(\delta\wedge \sigma)(\delta\vee\sigma)+\lambda_1\eta_1\zeta_1+\dots+\lambda_q\eta_q\zeta_q,\\
\eta_i\le_\str \delta \wedge\sigma, \ \delta_i\vee\sigma \le_\str \zeta_i, \ i=1,\dots,q,\\
\ini(\delta\sigma)=\ini((\delta\wedge \sigma)(\delta \vee\sigma))>\ini(\eta_1\zeta_1)>\cdots > \ini(\eta_q\zeta_q).
\end{multline*}
 
\item For every tableau $\Delta$ there exist standard tableaux $\Sigma_0\dots,\Sigma_q$ of the same shape as $\Delta$ and uniquely determined coefficients $\lambda_1,\dots,\lambda_q$, $q\ge0$, such that
$$
\Delta=\Delta_0+\lambda_1\Sigma_1+\dots+\lambda_p\Sigma_p,\quad \ini(\Delta)=\ini(\Sigma_0)>\ini(\Sigma_1)>\dots>\ini(\Sigma_q).
$$
 
\end{enumerate}
\end{theorem}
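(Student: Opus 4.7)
The plan is to derive Part (2) from Part (1) by iteration, and to prove Part (1) itself by Noetherian induction on the leading monomial $\ini(\delta\sigma)$ in the lex order. The inductive step is supplied by a classical Pl\"ucker/Laplace identity, while all uniqueness statements reduce, at the end of the day, to the linear independence of standard tableaux, which itself follows from the injectivity of $\Sigma\mapsto\ini(\Sigma)$ on standard tableaux (Proposition \ref{uni-init}).

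For Part (1), assume WLOG $t\ge u$; if $\delta\le_\str\sigma$, then $\delta\wedge\sigma=\delta$ and $\delta\vee\sigma=\sigma$, so the assertion holds with $q=0$. Otherwise the first step would be to record the monomial identity $\ini((\delta\wedge\sigma)(\delta\vee\sigma))=\ini(\delta\sigma)$: both sides reduce to
\[
\prod_{i=1}^{u} x_{i,b_i}\,x_{i,c_i}\,\cdot\,\prod_{i=u+1}^{t} x_{i,b_i},
\]
because $\{b_i,c_i\}=\{\min(b_i,c_i),\max(b_i,c_i)\}$ as multisets. The main algebraic input would then be a Pl\"ucker/Laplace identity: letting $i$ be the smallest index with $b_i>c_i$, Laplace expansion along the $t+1$ columns $c_1,\dots,c_i,b_i,\dots,b_t$ yields
\[
\delta\sigma = (\delta\wedge\sigma)(\delta\vee\sigma) + \sum_{j} \pm\, \delta'_j\sigma'_j,
\]
where each $\delta'_j\sigma'_j$ is a product of a $t$-minor and a $u$-minor obtained by a nontrivial column exchange, and satisfies $\ini(\delta'_j\sigma'_j)<_\lex\ini(\delta\sigma)$. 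Applying the inductive hypothesis to each $\delta'_j\sigma'_j$ straightens it; the constraints $\eta_i\le_\str\delta\wedge\sigma$ and $\delta\vee\sigma\le_\str\zeta_i$ are preserved through the recursion because each column exchange in the Pl\"ucker identity produces a pair whose meet is still bounded above by $\delta\wedge\sigma$ and whose join is still bounded below by $\delta\vee\sigma$ (the extremality of meet and join among admissible exchanges). Uniqueness follows because any two such decompositions would differ by a nontrivial linear relation among standard pairs, contradicting the injectivity of $\ini$ on standard tableaux.

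For Part (2), I would iterate: as long as $\Delta=\delta_1\cdots\delta_p$ has a non-standard adjacent pair $(\delta_i,\delta_{i+1})$, apply Part (1) to rewrite $\delta_i\delta_{i+1}$. Since the rewrite produces only pairs of shape $(|\delta_i|,|\delta_{i+1}|)$, every resulting tableau still has shape $|\Delta|$. The leading contribution $(\delta_i\wedge\delta_{i+1})(\delta_i\vee\delta_{i+1})\prod_{l\neq i,i+1}\delta_l$ has the same initial monomial as $\Delta$, and iterating this bubble-sort-style operation reaches a standard tableau $\Sigma_0$ of shape $|\Delta|$ with $\ini(\Sigma_0)=\ini(\Delta)$ in finitely many steps (as in the sketch of Proposition \ref{uni-init}). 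Every error term generated along the way is a tableau of shape $|\Delta|$ with strictly smaller initial monomial; Noetherian induction on $<_\lex$ within the finite set of degree-$d$ monomials straightens each of them into a sum $\sum\lambda_k\Sigma_k$ of standard tableaux of shape $|\Delta|$ with $\ini(\Sigma_k)<\ini(\Delta)$. Uniqueness once more reduces to the linear independence of standard tableaux of shape $|\Delta|$.

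The main obstacle is the Pl\"ucker/Laplace identity in Part (1): one must fix the correct sign conventions, identify $(\delta\wedge\sigma)(\delta\vee\sigma)$ as the lex-leading summand, and verify that every other summand both has strictly smaller leading monomial and satisfies the meet/join extremality used above. This is classical and carried out in detail in \cite[Ch.~4]{BV}; once it is in place, everything else unfolds mechanically from Proposition \ref{uni-init} and induction on the initial monomial.
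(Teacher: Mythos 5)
Your proposal takes a genuinely different route from the paper, because the paper does not prove Theorem \ref{straight} internally at all: it observes that the $K$-algebra generated by the $t$-minors of the first $t$ rows is the multihomogeneous coordinate ring of the flag variety and imports the standard monomial theory wholesale from \cite[14.11]{MS} and \cite[(11.3), (11.4)]{BV}, using Proposition \ref{uni-init} only to put the leading term into the meet--join normal form stated here. You instead reconstruct the classical argument: Pl\"ucker/Garnir relations plus induction, deducing (2) from (1) by a bubble-sort iteration, and obtaining all uniqueness statements from the linear independence of standard tableaux, which as you correctly note is exactly the injectivity of $\Sigma\mapsto\ini(\Sigma)$ from Proposition \ref{uni-init}. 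The identity $\ini((\delta\wedge\sigma)(\delta\vee\sigma))=\ini(\delta\sigma)$, the reduction of (2) to (1), and the uniqueness arguments are all sound. Your route buys self-containedness modulo only the raw Pl\"ucker identity; the paper's route outsources the delicate combinatorics (signs, the bounds $\eta_i\le_\str\delta\wedge\sigma$ and $\delta\vee\sigma\le_\str\zeta_i$, termination) entirely.

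Two concrete points in your inductive step do not work as written, though both are repaired in the reference you cite. First, a single Garnir expansion along $c_1,\dots,c_i,b_i,\dots,b_t$ does not in general have $(\delta\wedge\sigma)(\delta\vee\sigma)$ among its terms: for $j>i$ with $c_j<b_j$ the meet carries the entry $c_j$ in position $j$, and $c_j$ is not among the shuffled indices. The meet--join leading term only emerges after the full recursion, and the clean way to identify it is a posteriori: once some standard representation of $\delta\sigma$ exists, its leading standard tableau must be the unique standard tableau with initial monomial $\ini(\delta\sigma)$, which by Proposition \ref{uni-init} is $(\delta\wedge\sigma)(\delta\vee\sigma)$, with coefficient $1$ since all leading coefficients are $1$. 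Second, not every non-identity shuffle term has strictly smaller initial monomial in $<_\lex$; the shuffle that merely exchanges $b_i$ and $c_i$ at position $i$ reproduces $\ini(\delta\sigma)$ exactly (already in the paper's example $[1\,3\,4][2\,4]$ has the same initial monomial as $[1\,4][2\,3\,4]$), and such a term need not be standard. Hence Noetherian induction on $\ini(\delta\sigma)$ alone is not well-founded, and you need a secondary measure (number of inversions, or the well-founded order on incomparable pairs used in \cite{BV}). Since you explicitly defer the Pl\"ucker bookkeeping to \cite{BV}, these are repairable imprecisions rather than fatal errors, but the induction scheme should be stated with the corrected measure.
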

Note that the $K$-algebra generated by the $t$-minors of the first $t$-rows for $t=1,\dots, m$ is   the coordinate of the flag variety. Hence Theorem \ref{straight} can be deduced from \cite[14.11]{MS}, and can also be derived from   \cite[(11.3) and (11.4)]{BV}, taking into account Proposition \ref{uni-init}. 

\section{Initial ideals and primary decomposition}

The main objects of this paper are products of ideals $I_t(a)$. We will access them  via  the initial ideals
$$
J_t(a)=\ini(I_t(a)).
$$
Our first goal is to determine the primary decompositions of such products along with their initial ideals. For the powers of a single ideal $I_t(a)$ the answer is well-known:

\begin{theorem}\label{classic}\leavevmode
\begin{enumerate}
\item The powers of the prime ideal $I_t(a)$ are primary. In other words, the ordinary and the symbolic powers of $I_t(a)$ coincide.
\item $J_t(a)$ is generated by the initial monomials $\ini(\delta)$ of the $t$-minors  of $I_t(a)$.
\item $\ini(I_t(a)^k)=J_t(a)^k$ for all $k\ge 1$.
\end{enumerate}
\end{theorem}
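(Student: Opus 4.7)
The plan is to reduce each part of Theorem~\ref{classic} to the corresponding assertion of Theorem~\ref{wellknown}, applied not to the full matrix $X$ but to the submatrix $X_t(a)$. By the standing hypothesis $t+a\le n+1$, the matrix $X_t(a)$ is itself a $t\times(n-a+1)$ generic matrix with at least as many columns as rows, and $I_t(a)$ is literally its ideal of maximal minors. Let $S=K[x_{ij}:1\le i\le t,\ a\le j\le n]$ be the polynomial subring of $R$ spanned by the entries of $X_t(a)$. Then $R=S[y_1,\dots,y_N]$ where $y_1,\dots,y_N$ are the remaining indeterminates; thus $R$ is free, hence faithfully flat, over $S$, and $I_t(a)=I_t(X_t(a))R$ is the extension of the prime ideal $I_t(X_t(a))\subset S$.

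For (1), Theorem~\ref{wellknown}(2) applied to $X_t(a)$ in $S$ says that each $I_t(X_t(a))^k$ is primary in $S$. Primaryness is preserved under polynomial ring extension (a prime extends to a prime, and flatness together with this fact gives that the extension of a primary ideal is primary), so the powers of $I_t(a)$ are primary in $R$; equivalently, ordinary and symbolic powers of $I_t(a)$ agree.

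For (2) and (3), observe that the diagonal lexicographic order $<_\lex$ on $R$ restricts to a diagonal monomial order on $S$: the diagonal of a $t$-minor $[b_1\dots b_t]$ of $X_t(a)$ is $x_{1,b_1}\cdots x_{t,b_t}$, a monomial in $S$. Applying Theorem~\ref{wellknown}(3) to $X_t(a)$ inside $S$, the $t$-minors of $X_t(a)$ form a Gr\"obner basis of every power $I_t(X_t(a))^k$, and $\ini(I_t(X_t(a))^k)=\ini(I_t(X_t(a)))^k$ in $S$. Since initial ideals and Gr\"obner bases behave correctly under the polynomial extension $S\subset R$ (leading terms of elements of $S$ are unchanged when viewed in $R$, and generators of an ideal in $S$ remain generators of its extension in $R$), the $t$-minors still form a Gr\"obner basis of $I_t(a)^k$ in $R$. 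The case $k=1$ gives (2), and the general case gives $\ini(I_t(a)^k)=J_t(a)^k$, proving (3).

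The main conceptual point is recognizing that $I_t(a)$ is, up to the inert polynomial extension $S\subset R$, a standard ideal of maximal minors of a generic matrix; once this is noticed, each item transfers from $S$ to $R$ by routine faithful-flatness arguments, and I do not anticipate any substantive obstacle.
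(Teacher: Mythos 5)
Your proposal is correct and follows essentially the same route as the paper: the paper simply cites the known results for the $a=1$ case (\cite[(9.18)]{BV} for primaryness and \cite{Co4} for the Gr\"obner basis statements, i.e., the content of Theorem~\ref{wellknown}) and remarks that they extend to general $a$ because polynomial extensions of the ground ring are harmless. You have merely spelled out that harmless extension (faithful flatness for primaryness, stability of Gr\"obner bases under adjoining inert variables), which is exactly the intended argument.
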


See \cite[(9.18)]{BV} for the first statement and \cite{Co4} for the remaining statements. The results just quoted are formulated for $a=1$, but they immediately extend to general $a$ since polynomial extensions of the ground ring are harmless.

The primary decompositions of the powers of  $J_t(a)$ have been determined in \cite[Prop. 7.2]{BC3}. We specify the technical details only as far as they are needed in this article:

\begin{theorem}\label{in-powwer-primary}
The ideal $J_t(a)$ is radical. It is the intersection $J_t(a)=\bigcap_i P_i$ of prime ideals $P_i$ that are generated by $(n-a-t+2)$ indeterminates,  and  $J_t(a)^k=\bigcap_i P_i^k$ for all $k$. In particular, $J_t(a)^k$ has no embedded primes and it is integrally closed.
\end{theorem}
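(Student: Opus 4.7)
The plan is to leverage the fact that, by Theorem~\ref{classic}(2), $J_t(a)$ is the squarefree monomial ideal generated by the diagonals $\la b_1\dots b_t\ra=x_{1b_1}\cdots x_{tb_t}$ with $a\le b_1<\cdots<b_t\le n$. Being squarefree, $J_t(a)$ is automatically radical, and its primary decomposition equals the intersection of the monomial primes $P_V=(x_{ij}:(i,j)\in V)$, where $V$ runs over the minimal vertex covers of the hypergraph whose hyperedges are the supports of the diagonals.

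Next I would identify these minimal vertex covers. Viewing the cells of the $t\times(n-a+1)$ NE submatrix as a poset under coordinatewise order, chains of length $t$ are precisely the diagonals, so $V$ is a vertex cover iff its complement has chain-length at most $t-1$. A K\"onig/Dilworth-type duality argument, combined with the explicit packing of $(n-a+1)-(t-1)=n-a-t+2$ pairwise disjoint diagonals $(1,j)(2,j+1)\cdots(t,j+t-1)$ for $j=1,\dots,n-a-t+2$, shows that every minimal such $V$ has exactly this cardinality; equivalently, the Stanley--Reisner complex of $J_t(a)$ is pure of the expected dimension.

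The heart of the theorem is the equality $J_t(a)^k=\bigcap_V P_V^k$. The inclusion $\subset$ is immediate since every diagonal lies in every $P_V$. For the reverse, a monomial $m=\prod x_{ij}^{a_{ij}}$ lies in $P_V^k$ iff $\sum_{(i,j)\in V}a_{ij}\ge k$, so the task is: assuming this inequality for every minimal cover $V$, factor $m$ as a product of $k$ diagonals. I would cast this as an integer max-flow/min-cut problem on an acyclic network whose vertices are the cells of the NE submatrix with vertex capacity $a_{ij}$, with a source connected to row~$1$, a sink connected from row~$t$, and infinite-capacity edges $(i,j)\to(i+1,j')$ for $j<j'$. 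Diagonals are then in bijection with source-to-sink paths, integer flows of value $k$ correspond to factorizations $m=D_1\cdots D_k$, and minimum vertex cuts correspond to minimal vertex covers, so the claim follows from the integer Menger/Ford--Fulkerson theorem on this acyclic network.

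The main obstacle is the bookkeeping in this flow setup: one must verify that every finite vertex cut in the network actually restricts to a vertex cover of the hypergraph of diagonals, and conversely that every minimal cover provides a valid cut of the expected capacity; one must also check that an integral max flow of value $k$ can be decomposed into $k$ unit-capacity source-to-sink paths (i.e., $k$ distinct diagonals), which is where the acyclicity of the network is essential. Once $J_t(a)^k=\bigcap_V P_V^k$ is established, the remaining statements are formal: each $P_V^k$ is $P_V$-primary, so the decomposition has no embedded primes; and each $P_V^k$ is an integrally closed monomial ideal, its exponent vectors being cut out by the single linear inequality $\sum_{(i,j)\in V}a_{ij}\ge k$, hence so is the intersection $J_t(a)^k$.
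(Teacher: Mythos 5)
The paper itself does not prove this theorem---it is quoted from \cite{BC3}---so your argument is necessarily an independent route, and its core is sound. The max-flow/min-cut proof of the key inclusion $\bigcap_V P_V^k\subset J_t(a)^k$ works exactly as you describe: the layered network is acyclic, its source-to-sink paths are precisely the diagonals of $X_t(a)$ (each step advances the row by one, so a path picks one cell per row with strictly increasing columns), its finite vertex cuts are precisely the transversals of the diagonals, the minimum of $\sum_{(i,j)\in V}a_{ij}$ over all transversals equals the minimum over inclusion-minimal ones since the capacities are nonnegative, and an integral flow of value $k$ decomposes into $k$ (not necessarily distinct) paths using each cell $(i,j)$ at most $a_{ij}$ times, i.e., a factorization $D_1\cdots D_k\mid m$. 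Together with the trivial inclusion and the formal endgame (radicality of a squarefree monomial ideal, $P_V^k$ being $P_V$-primary and integrally closed, intersections of integrally closed ideals being integrally closed), this gives everything in the statement except one clause, and it is more elementary and self-contained than the KRS-based treatment in \cite{BC3}.

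The gap is the assertion that every minimal prime is generated by exactly $n-a-t+2$ indeterminates, i.e., that \emph{every inclusion-minimal} vertex cover has that cardinality. Your packing of $n-a-t+2$ pairwise disjoint diagonals gives the lower bound $|V|\ge n-a-t+2$ for every cover, and K\"onig/Menger duality identifies the \emph{minimum} cover size with the packing number---but neither says anything about inclusion-minimal covers that are not of minimum size, and unmixedness genuinely can fail for antichain-type complexes of posets (in the poset with $a<b$, $c<b$ and $a,c$ incomparable, both $\{b\}$ and $\{a,c\}$ are maximal antichains). You need a separate argument here. The cleanest fix is the explicit description of the minimal covers: after the shear $(i,j)\mapsto(i,j-i+1)$ the diagonals become the weakly increasing top-to-bottom paths $(1,c_1),\dots,(t,c_t)$, $a\le c_1\le\cdots\le c_t\le n-t+1$, in a $t\times(n-a-t+2)$ grid, and one checks (using that the constant-column paths force at least one chosen cell per column, and a discrete intermediate-value argument for the covering property) that the inclusion-minimal transversals are exactly the sets consisting of one cell per column with weakly increasing row indices---hence all of cardinality $n-a-t+2$, recovering the description of the minimal primes in \cite{BC3}. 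With that step supplied, your proof is complete.
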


For the precise description of the set of prime ideals $P_i$ appearing in Theorem \ref{in-powwer-primary} we refer the reader to \cite{BC3}.

Now we introduce the main players formally:

\begin{definition}
\label{patterns}
A \emph{NE-pattern} is a finite sequence $\bigl((t_1,a_1),\dots,(t_w,a_w)\bigr)$ of pairs of positive natural numbers with $t_i+a_i\le n+1$ for $i=1,\dots,w$ and  which is ordered according to the following rule: if  $1\leq i<j\leq w$, then
$$
a_i\le a_j\quad \text{and}\quad t_i\ge t_j \text{ if } a_i=a_j.
$$

Let $S=\bigl((t_1,a_1),\dots,(t_w,a_w)\bigr)$ be a NE-pattern. A \emph{pure NE-tableau of pattern} $S$ is a product of minors
$$
\Delta=\delta_1\cdots\delta_w,\quad \text{ such that } \delta_i \text{ is a $t_i$-minor of }   X_{t_i}(a_i) ,\ i=1,\dots,w.
$$

An \emph{NE-tableau} is a product $M\Delta$ of a monomial $M$ in the indeterminates $x_{ij}$ and a pure NE-tableau $\Delta$.

 The NE-\emph{ideal} of pattern $S$ is the ideal generated by all (pure) NE-tableaux of pattern $S$. In other words, it is the ideal
$$
I_S=I_{t_1}(a_1)\cdots I_{t_w}(a_w).
$$
Furthermore we set
$$
J_S=\ini(I_S).
$$
\end{definition} 

So $I_S$ is  simply a product of  ideals of type $I_{t}(a)$ where, by convention,  the factors have been ordered according to the rule specified in \ref{patterns}. 

For $S=\bigl((t_1,a_1),\dots,(t_w,a_w)\bigr)$ and a pair $(u,b)$ we set
$$
e_{ub}(S)=|\{i: b \le a_i \text{ and } u\le t_i \}|.
$$
Note that $b \le a_i \text{ and } u\le t_i$ is indeed equivalent to  $I_{t_i}(a_i)\subset I_u(b)$.

\begin{theorem}
\label{primary}
Let  $S=\bigl((t_1,a_1),\dots,(t_w,a_w)\bigr)$ be a NE-pattern. Then  the following hold:
\begin{align}
J_S&=J_{t_1}(a_1)\cdots J_{t_w}(a_w);\label{ini}\\
J_S&=\bigcap_{u,b}J_u(b)^{e_{ub}(S)};\label{ini-inter}\\
I_S&=\bigcap_{u,b}I_u(b)^{e_{ub}(S)}.\label{inter}
\end{align}
Equation \eqref{inter} gives a primary decomposition of $I_S$. The ideals $I_S$ and $J_S$ are integrally closed.
\end{theorem}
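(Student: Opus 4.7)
My plan is to establish equations \eqref{ini} and \eqref{ini-inter} simultaneously by squeezing $J_S$ between the two natural candidates, namely
\[
J_{t_1}(a_1)\cdots J_{t_w}(a_w)\ \subseteq\ J_S\ \subseteq\ \bigcap_{u,b}J_u(b)^{e_{ub}(S)},
\]
then proving that the extreme ideals coincide as monomial ideals, and afterwards to deduce \eqref{inter} from \eqref{ini-inter} by a standard comparison of initial ideals.

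The two displayed inclusions are immediate. For the left one, every pure NE-tableau $\delta_1\cdots\delta_w$ of pattern $S$ belongs to $I_S$, and its initial monomial $\ini(\delta_1)\cdots\ini(\delta_w)$ is a typical generator of $J_{t_1}(a_1)\cdots J_{t_w}(a_w)$, so it lies in $\ini(I_S)=J_S$. For the right one, the integer $e_{ub}(S)$ is defined precisely to count how many of the factors of $I_S$ sit inside $I_u(b)$, whence $I_S\subseteq I_u(b)^{e_{ub}(S)}$; Theorem~\ref{classic}(3) then upgrades this to $J_S\subseteq J_u(b)^{e_{ub}(S)}$. The real content of the theorem is therefore the monomial identity
\[
J_{t_1}(a_1)\cdots J_{t_w}(a_w)\ =\ \bigcap_{u,b}J_u(b)^{e_{ub}(S)},
\]
which I regard as the principal obstacle. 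I expect to attack it by a greedy extraction argument based on the \emph{northeast canonical form} announced in the introduction: starting from a monomial $m$ in the intersection, one processes the pairs $(t_1,a_1),\dots,(t_w,a_w)$ in the order prescribed by the NE-pattern, and at each stage extracts a diagonal $\ini(\delta_i)$ of shape $(t_i,a_i)$ from the current residual, verifying inductively that what remains still satisfies the intersection conditions with appropriately decremented exponents. The ordering convention built into an NE-pattern---$a_i\le a_j$ throughout, with $t_i\ge t_j$ when $a_i=a_j$---should be exactly what makes the inductive step go through, and certifying the feasibility of each extraction will be the heart of the proof.

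Once the monomial identity is secured, the three-term chain collapses to equalities, yielding both \eqref{ini} and \eqref{ini-inter}. For \eqref{inter}, the inclusion $I_S\subseteq \bigcap_{u,b}I_u(b)^{e_{ub}(S)}$ is trivial, and
\[
\ini\Bigl(\bigcap_{u,b}I_u(b)^{e_{ub}(S)}\Bigr)\ \subseteq\ \bigcap_{u,b}\ini(I_u(b)^{e_{ub}(S)})\ =\ \bigcap_{u,b}J_u(b)^{e_{ub}(S)}\ =\ J_S\ =\ \ini(I_S),
\]
while the reverse inclusion $\ini(I_S)\subseteq\ini(\bigcap_{u,b}I_u(b)^{e_{ub}(S)})$ is forced by the ideal containment itself; hence both ideals share the same initial ideal, and the standard comparison of $K$-bases of standard monomials upgrades this to an equality of the ideals themselves. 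Finally, each $I_u(b)^{e_{ub}(S)}$ is primary and integrally closed by Theorem~\ref{wellknown}(2) applied to the northeast submatrix $X_u(b)$, so \eqref{inter} is a primary decomposition and $I_S$ is integrally closed as an intersection of integrally closed ideals; the analogous conclusion for $J_S$ comes from Theorem~\ref{in-powwer-primary}.
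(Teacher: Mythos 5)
Your overall strategy coincides with the paper's: you set up the chain of inclusions
\[
J_{t_1}(a_1)\cdots J_{t_w}(a_w)\ \subseteq\ J_S\ \subseteq\ \bigcap_{u,b}J_u(b)^{e_{ub}(S)},
\]
correctly identify the monomial identity as the real content, and your deduction of \eqref{inter} from \eqref{ini-inter} (via the fact that nested ideals with the same initial ideal coincide) and your remarks on primariness and integral closure are exactly those in the paper.

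However, there is a genuine gap: the crucial inclusion $\bigcap_{u,b}J_u(b)^{e_{ub}(S)}\subseteq \prod_i J_{t_i}(a_i)$ is only gestured at. You correctly anticipate a greedy extraction argument organized by the NE-pattern ordering, and you explicitly flag ``certifying the feasibility of each extraction'' as ``the heart of the proof''---but that heart is not supplied. In the paper this is precisely where the work lies: one extracts, from a monomial $M$ in the intersection, the \emph{lexicographically smallest} diagonal $F$ of length $t_w$ (the last entry of the pattern, the one with the largest column bound), and then one must verify that $M/F$ still lies in $J_u(b)^{e_{ub}(T)}$ for the truncated pattern $T$. This verification splits into two nontrivial cases. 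In the case $b\le a_w$, $u>t_w$ (Lemma~\ref{easy}), the key point is that by choosing $F$ lex-smallest, the standard factorization $D_1\cdots D_k E$ furnished by the one-bound result of \cite{BBC} has all of $F$'s intersection confined to the shortest factor $E$, so the $u$-diagonals $D_1,\dots,D_k$ survive intact. In the case $b\le a_w$, $u\le t_w$ (Lemma~\ref{harder}), $F$ can cut through several of the $D_i$, and one must show the fragments can be ``spliced'' into $k-1$ new $u$-diagonals via a careful jump-concatenation argument exploiting lex-minimality and the straightening ordering of the $D_i$. Without these two lemmas (or a substitute), the claimed monomial identity is unproved; and since the paper's whole point is that a \emph{naive} standard straightening does not respect the NE column bounds, the feasibility of the extraction is not a formality one can safely defer.

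A smaller point: you describe processing the pairs ``in the order $(t_1,a_1),\dots,(t_w,a_w)$,'' i.e., starting from the smallest column bound and the longest minors. The paper's induction peels off the \emph{last} factor $(t_w,a_w)$ first (largest $a$, shortest $t$ for ties), which is what makes the two lemmas go through; if you really mean to extract from the front, you would need to redo the combinatorics and it is not clear the analogues of Lemmas~\ref{easy} and~\ref{harder} hold in that orientation.
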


 As soon as the equation \eqref{inter} will have been proved, it indeed yields a primary decomposition of $I_S$ since all the ideals $I_u(b)^e$ are primary, being powers of ideals of maximal minors. The intersection in \eqref{inter} is almost always redundant. An irredundant decomposition will be described in  Proposition \ref{irred}. Together with Theorem \ref{in-powwer-primary}, equation \eqref{ini-inter}  gives a primary decomposition of  $J_S$. The ideals $I_S$ and $J_S$ are integrally closed  because the ideals appearing in their primary decomposition are symbolic powers of prime ideals and therefore integrally closed.  

The special case of Theorem \ref{primary} in which all $a_i$ are equal  has been proved in   \cite[Corollary 2.3]{BBC} and \cite[Theorem 3.3]{BBC}. It will be used in the proof of the theorem.  (Note however that in \cite{BBC} our ideal $I_S$ is denoted by $J_S$.) 

\begin{proof}[Proof of Theorem \ref{primary}]
By the definition of $e_{ub}(S)$ we have
$$
I_S \subset \bigcap_{u,b}I_u(b)^{e_{ub}(S)}.
$$
This implies the chain of inclusions
$$
\prod_{i=1}^w J_{t_i}(a_i) \subset J_S\subset \ini\biggl(\bigcap_{u,b}I_u(b)^{e_{ub}(S)}\biggr) \subset \bigcap_{u,b} \ini\bigl( I_u(b)^{e_{ub}(S)}\bigr)
= \bigcap_{u,b}J_u(b)^{e_{ub}(S)}
$$
where we have used Theorem \ref{classic} for the equality of the two rightmost terms.
If
\begin{equation}
\bigcap_{u,b}J_u(b)^{e_{ub}(S)}\subset \prod_{i=1}^w J_{t_i}(a_i))\label{crucial}
\end{equation}
as well, then we have equality throughout, implying \eqref{ini} and \eqref{ini-inter}. Then \eqref{inter} follows since two ideals with the same initial ideal must coincide if one is contained in the other. Therefore \eqref{crucial} is the crucial inclusion.

We prove it by induction on $w$. Let $M$ be a monomial in $\bigcap_{u,b}J_u(b)^{e_{ub}(S)}$. Then $M$ is contained in $J_{t_w}(a_w)$. This ideal is generated by all diagonals $\la f_1 \dots f_{t_w}\ra$ with $f_1\ge a_w$ by Theorem \ref{classic}(2). Among all these diagonals we choose the \emph{lexicographically smallest} and  call it $F$.  

Set $T=\bigl((t_1,a_1),\dots,(t_{w-1},a_{w-1})\bigr)$. It is enough to show that $M/F\in \bigcap_{u,b}J_u(b)^{e_{ub}(T)}$, and for this containment we must show $M/F\in J_u(b)^{e_{ub}(T)}$ for all $u$ and $b$. Evidently
$$
e_{ub}(T)=\begin{cases}
e_{ub}(S),& b \le a_w,\ u >t_w,\\
e_{ub}(S)-1,& b \le a_w,\ u \le t_w,\\
0,&  \text{else.}
\end{cases}
$$

If $e_{t_wb}(T)=0$, there is nothing to show. If $b \le a_w$, $u >t_w$, we have $e_{t_wb}(S)\ge e_{ub}(S)+1$ because $I_{t_w}(a_w)$ contributes to $e_{t_wb}(S)$, but not to $e_{ub}(S)$. This observation is important for the application of Lemma \ref{easy} that covers this case. The case $b \le a_w$, $u \le t_w$ is Lemma \ref{harder}.
\end{proof}

\begin{lemma}\label{easy}
Let $k\in \NN$.  Let $b\le a$ and $u>t$.  Let  $M\in J_u(b)^k\cap J_t(b)^{k+1}\cap J_t(a)$ be a monomial and let $F$  be  the lexicographic smallest diagonal of length $t$ that divides $M$.  Then $M/F\in J_u(b)^k$. 
\end{lemma}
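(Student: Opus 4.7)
The plan is to prove the equivalent statement that one can choose $u$-diagonals $E_1,\dots,E_k$ with $e_{j,1}\ge b$ such that $F\cdot E_1\cdots E_k$ divides the monomial $M$; this immediately yields $E_1\cdots E_k\sep M/F$, hence $M/F\in J_u(b)^k$. I will start from an arbitrary witness $E_1,\dots,E_k$ for $M\in J_u(b)^k$ and modify it by a sequence of exchanges until $F$ sits beside it without conflict.

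Write $m_{ij}$ for the multiplicity of $x_{ij}$ in $M$. Any obstruction to $F\cdot E_1\cdots E_k\sep M$ can only occur at a position $(i,f_i)$ with $i\le t$, and precisely when $m_{i,f_i}=|\{j:e_{j,i}=f_i\}|$. The hypothesis $M\in J_t(b)^{k+1}$ yields $\sum_{j\ge b}m_{i,j}\ge k+1$ in every such row, while the $E_j$'s consume only $k$ row-$i$ occurrences with column index $\ge b$; consequently some column $j^*\ge b$ in row $i$ must carry spare multiplicity, and the fundamental move will be to swap an offending entry $e_{j_0,i}=f_i$ to $j^*$, dropping the usage at $f_i$ by one. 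For the new tuple to remain a valid $u$-diagonal with first entry $\ge b$, one needs $e_{j_0,i-1}<j^*<e_{j_0,i+1}$.

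The lex-minimality of $F$ controls where the spare $j^*$ can live. If $j^*\in[b,a-1]$, then $j^*<a\le f_i=e_{j_0,i}<e_{j_0,i+1}$, so the upper inequality is automatic; if instead every available $j^*$ in row $i$ lies $\ge a$, the minimality of $f_1<\cdots<f_t$ forces $j^*$ into a controlled range past $f_i$. In either case the elementary swap may have to cascade in order to respect strict monotonicity of the modified $E_{j_0}$: backward through earlier rows when $j^*\le e_{j_0,i-1}$, and forward through later rows via an augmenting-path move when $j^*\ge e_{j_0,i+1}$. The chain $f_1<\cdots<f_t$, together with the strictly increasing structure of each $E_j$, keeps these cascades inside valid diagonals with first entries $\ge b$.

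The main obstacle will be to organize these cascading exchanges rigorously and to guarantee termination in every configuration. My plan is to equip the set of witnesses $(E_1,\dots,E_k)$ with a monovariant, for instance the lexicographic vector whose entries are the row-column positions of unresolved conflicts, and to verify that each elementary swap strictly decreases it. Once this termination argument is in place, finitely many swaps produce a conflict-free witness satisfying $F\cdot E_1\cdots E_k\sep M$, which is exactly the divisibility needed to conclude $M/F\in J_u(b)^k$.
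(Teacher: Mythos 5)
Your proposal correctly isolates the obstruction (a row $i\le t$ in which all $m_{i,f_i}$ copies of $x_{i,f_i}$ are consumed by the chosen $u$-diagonals) and correctly extracts from $M\in J_t(b)^{k+1}$ a spare occurrence in every such row. But the proof is not complete: the entire difficulty of the lemma is concentrated in the step you explicitly defer, namely organizing the cascading exchanges and proving they terminate. A single swap $e_{j_0,i}\mapsto j^*$ generally violates strict monotonicity of $E_{j_0}$, the repair cascades into neighbouring rows, and those rows may themselves carry conflicts at $f_{i'}$ that your move re-creates or newly creates; you propose a lexicographic monovariant but do not define it precisely or verify that an elementary swap decreases it, and it is not evident that one exists for an \emph{arbitrary} witness $E_1,\dots,E_k$. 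Moreover, the lex-minimality of $F$ --- which is genuinely needed (for $t=1$, $u=2$, $M=x_{11}x_{12}x_{22}$ the conclusion fails if one divides by $x_{11}$ instead of the lex-smallest $x_{12}$) --- enters your argument only through the vague statement that it ``forces $j^*$ into a controlled range,'' with no precise inequality extracted from it. As it stands, the crux is asserted rather than proved.

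The paper's proof sidesteps the exchange combinatorics entirely by choosing a \emph{structured} witness at the outset: by \cite[Theorem 3.3]{BBC} and Proposition \ref{uni-init}, $M$ is divisible by $\ini$ of a standard tableau, i.e., by $D_1\cdots D_k E$ with $D_1\le_\str\dots\le_\str D_k\le_\str E$, where the $D_j$ are $u$-diagonals and $E$ is a $t$-diagonal, all starting in column $\ge b$. Lex-minimality of $F$ gives $F\vee E=F$, i.e., $e_i\le f_i$ for all $i$; combined with $D_j\le_\str E$ this forces any variable common to $F$ and some $D_j$ to equal the corresponding variable of $E$, so discarding the single factor $E$ makes room for $F$ and yields $D_1\cdots D_k\mid M/F$ at once. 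If you want to keep your exchange-based route, you would need to supply the missing termination argument in full; alternatively, importing the ordered witness (in particular the extra $t$-diagonal $E$ and the comparabilities $D_j\le_\str E$) collapses all cascades into the one clean observation above.
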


\begin{proof}
We can apply \cite[Theorem 3.3]{BBC} to $J_u(b)^k\cap J_t(b)^{k+1}$: $M$ is divided by a product $D_1\cdots D_k E$ where $D_1,\dots,D_k$ are diagonals of length $u$ starting in column $b$ or further right, and $E$ is such a diagonal of length $t$. Even more: $J_u(b)^k\cap J_t(b)^{k+1}$ is generated by the initial monomials of the standard tableaux in $I_u(b)\cap I_t(b)^{k+1}$ (Proposition \ref{uni-init}). Therefore we can assume that $D_1\le_\str \dots\le_\str D_k\le_\str E$.

The greatest common divisor of $F$ and $D_1\cdots D_k E$ must divide $E$---if not we could replace $F$ by $F\vee E$ and obtain a lexicographically smaller diagonal of length $t$. Therefore $D_1\cdots D_k$ divides $M/F$.
\end{proof}

\begin{lemma}\label{harder}
Let $k\in \NN$.  Let $b\le a$ and $u\le t$.  Let  $M\in J_u(b)^k\cap J_t(a)$ be a monomial and let $F$  be  the lexicographic smallest diagonal of length $t$ that divides $M$.  Then $M/F\in J_u(b)^{k-1}$.
\end{lemma}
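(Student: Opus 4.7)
The plan follows the template of Lemma \ref{easy}: combine a standard-tableau witness for $M \in J_u(b)^k$ with the lex-minimality of $F$ to extract a witness for $M/F \in J_u(b)^{k-1}$. A complication is that unlike in Lemma \ref{easy}, where BBC's identity $J_u(b)^k \cap J_t(b)^{k+1} = J_u(b)^k J_t(b)$ supplies a standard-tableau decomposition involving the auxiliary $t$-diagonal, here the analogous identity $J_u(b)^{k+1} \cap J_t(b) = J_u(b)^k J_t(b)$ would require the unavailable hypothesis $M \in J_u(b)^{k+1}$, so a more direct combinatorial argument is needed.

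First I would invoke Theorem \ref{classic}(3) to obtain $u$-diagonals $D_1 \le_\str \dots \le_\str D_k$ in column $\ge b$ with $D_1 \cdots D_k \mid M$. The crux of the proof is a combinatorial claim: one can choose such a standard tableau so that $F_u := \la f_1 \dots f_u \ra$, the first $u$ entries of $F$, divides $D_1 \cdots D_k$ as a monomial, and the selection function $\sigma(j)$ defined by $d_{\sigma(j),j}=f_j$ can be taken non-increasing in $j$. Both features should follow from the lex-minimality of $F$ by a replacement trick: if some $d_{\ell,j}$ lies in the open interval $(f_j,f_{j+1})$, then substituting it for $f_j$ in $F$ yields a valid, lex-smaller $t$-diagonal in column $\ge a$ still dividing $M$, a contradiction. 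This rules out the gaps that would block extractability of $F_u$ or force $\sigma$ to strictly increase. Configurations where the $d$-values in some column all lie below $f_j$ would be handled by a complementary tableau-modification step, upgrading some $d_{k,j}$ to $f_j$ (using $x_{j,f_j}\mid M$) while preserving the standard-tableau structure.

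Granted the claim, set $N := (D_1 \cdots D_k)/F_u$. Because $\sigma$ is non-increasing, the row-index shift $\pi_j$ induced by discarding $\sigma(j)$ in column $j$ satisfies $\pi_j(\ell') \le \pi_{j+1}(\ell')$ for every $\ell'$; consequently the remaining $k-1$ entries in each column, renumbered in their original order, form a $(k-1)\times u$ array whose rows are strictly increasing (valid $u$-diagonals in column $\ge b$) and whose columns are weakly increasing (standard-tableau order). Hence $N$ is the initial monomial of a standard tableau of $k-1$ $u$-diagonals in column $\ge b$, so $N \in J_u(b)^{k-1}$.

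Finally, the variables of $F/F_u = x_{u+1,f_{u+1}} \cdots x_{t,f_t}$ lie in rows $u+1,\dots,t$ and so are disjoint from those of $N$, which live in rows $1,\dots,u$. Therefore $\gcd(F,D_1 \cdots D_k) = F_u$ and $\mathrm{lcm}(F,D_1 \cdots D_k) = F \cdot N$; since $M$ is divisible by both $F$ and $D_1 \cdots D_k$, it is divisible by their lcm, and so $N$ divides $M/F$, giving $M/F \in J_u(b)^{k-1}$. The main obstacle is the combinatorial claim in the second paragraph: arranging, by iterated lex-minimality and tableau modifications that provably terminate, for $F_u$ to be simultaneously extractable and $\sigma$ to be non-increasing, is the technically delicate heart of the argument and is where I expect the bulk of the work.
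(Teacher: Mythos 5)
Your overall strategy---take a standard tableau $D_1\le_\str\dots\le_\str D_k$ of $u$-diagonals dividing $M$, exploit the lex-minimality of $F$, and splice the surviving fragments into $k-1$ new diagonals---is the same as the paper's, and your observation that the identity behind Lemma \ref{easy} is unavailable here is correct. However, the proof hinges on the combinatorial claim of your second paragraph, which you explicitly leave unproved, and that claim is both stronger than what is needed and not established by the replacement tricks you sketch. First, lex-minimality only excludes $d_{\ell,j}\in(f_j,f_{j+1})$; it does not exclude $d_{\ell,j}\ge f_{j+1}$, which does occur (take $M=x_{11}x_{14}x_{22}x_{25}x_{33}$ with $t=3$, $u=2$, $a=b=1$ and $D_1=\la 4\,5\ra$: then $F=\la 1\,2\,3\ra$ and $d_{1,1}=4>f_2$, yet no lex-smaller $3$-diagonal divides $M$). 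In such a configuration the obstruction to $x_{j,f_j}\mid D_1\cdots D_k$ is an entry \emph{above} $f_j$, and fixing it requires replacing a whole diagonal rather than ``upgrading'' one entry. Second, the upgrade of $d_{k,j}$ to $f_j$ need not preserve the strict increase within $D_k$ (one needs $f_j<d_{k,j+1}$, which is not automatic), so even the case you do address is not settled. As written, the normalization procedure neither covers all configurations nor is shown to terminate, and this is the entire content of the lemma.

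The paper's proof shows the normalization is unnecessary: keep the given tableau and consider only the rows $r_1\le\dots\le r_p$ in which $F$ \emph{actually} meets some $D_i$ (possibly far fewer than $u$, possibly none). Choosing $g_i$ maximal with $f_{r_i}=d_{g_i,r_i}$, lex-minimality forces $g_{i+1}\le g_i$, and at each strict descent one concatenates rows $1,\dots,r_i$ of $D_{g_{i+1}}$ with rows $r_i+1,\dots,u$ of $D_{g_i}$, losing exactly one $u$-diagonal in total. The variables $x_{j,f_j}$ in rows not met by any $D_i$ simply do not occur in $D_1\cdots D_k$, so dividing them out of $M$ is harmless; this is precisely what your computation $\gcd(F,D_1\cdots D_k)=F_u$ tries to force, and forcing it is where your argument stalls. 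If you replace your global claim by this local one (non-increasing witness indices on the rows actually met), your final splicing paragraph goes through essentially verbatim.
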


\begin{proof}
By Theorem \ref{classic} here exist diagonals $D_1,\dots,D_k$ such of length $u$ such that $D_1\cdots D_k$ divides $M$ and $D_1\le_\str\dots \le_\str D_k$. Division by $F$ can ``destroy'' more than one of these diagonals but, as we will see, the fragments can be joined to form $k-1$ diagonals of length $u$ as desired.

We explain the argument first by an example: $M=x_{11}x_{12}x_{23}x_{24}x_{34}\in J_2(1)^2\cap J_3(2)$. The lexicographically smallest diagonal of length $3$ is $\la 2 3 4 \ra$. It intersects both $2$-diagonals $\la 1 3\ra$ and $\la 2 4\ra$, but we can produce the new $2$-diagonal $\la 1 4\ra$ from the two fragments, and are done in this case: $M\in J_2(1)J_3(2)$. 

Let $r_1\le \dots\le r_p$ be the rows in which $F$ intersects one of the $D_i$, and choose $g_i$ maximal such that $F$ intersects $D_{g_i}$ in row $r_i$. In view of the order of the $D_i$ and by the choice of $F$ as the lexicographically smallest $t$-diagonal dividing $M$, we must have $g_{i+1}\le g_{i}$ for $i=1,\dots,p-1$. 

Every time that $F$ ``jumps'' to another diagonal, i.e., if $g_i>g_{i+1}$, we concatenate the entries in rows $1,\dots,r_i$ of $D_{g_{i+1}}$ with the entries in rows $r_i+1,\dots,u$ of $D_{g_{i}}$, thus producing a new diagonal. (Note that $F$ cannot return to $D_{g_{i+1}}$ in rows $\le r_i$ and has not touched $D_{g_{i}}$ in the other rows.) Only one $u$-diagonal is lost this way.
\end{proof}

Our next goal is to identify the irredundant components in the primary decomposition of $I_S$ described in Theorem \ref{primary}. To this end we prove the following facts. 

\begin{lemma}
\label{colon}
 Let $S$ be a NE-pattern and let $D$ be a subsequence of $S$.  Set $T=S\setminus D$.  Then $I_S:I_D=I_T$. 
\end{lemma}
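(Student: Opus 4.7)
The containment $I_T\subseteq I_S:I_D$ is immediate from the factorization $I_S=I_T\cdot I_D$. For the reverse, I plan to use the primary decompositions $I_S=\bigcap_{u,b}I_u(b)^{e_{ub}(S)}$ and $I_T=\bigcap_{u,b}I_u(b)^{e_{ub}(T)}$ from Theorem~\ref{primary}, together with the additive identity $e_{ub}(S)=e_{ub}(T)+e_{ub}(D)$. Since colon distributes over intersections, it suffices to establish, for each fixed $(u,b)$, the inclusion
\[
I_u(b)^{e}\,:\,I_D \;\subseteq\; I_u(b)^{\,e-k},
\qquad \text{where}\ e=e_{ub}(S)\ \text{and}\ k=e_{ub}(D).
\]

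Partition $D=D_1\sqcup D_2$ so that $j\in D_1$ iff $I_{t_j}(a_j)\subseteq I_u(b)$ (equivalently $t_j\ge u$ and $a_j\ge b$), so $|D_1|=k$. For each $j\in D_2$ the factor $I_{t_j}(a_j)$ contains a minor $\delta_j$ outside the prime $I_u(b)$; the product $g_2=\prod_{j\in D_2}\delta_j$ therefore lies in $I_{D_2}\setminus I_u(b)$. Since $I_u(b)^e$ is $I_u(b)$-primary, $g_2$ is a non-zero-divisor modulo $I_u(b)^e$, and cancelling it yields the identity $I_u(b)^{e}:I_D=I_u(b)^{e}:I_{D_1}$. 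The problem therefore reduces to
\[
I_u(b)^{e}\,:\,I_{D_1} \;\subseteq\; I_u(b)^{\,e-k},
\]
the opposite containment being immediate from $I_{D_1}\subseteq I_u(b)^{k}$.

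To close the gap, my plan is to exhibit an element of $I_{D_1}$ of $I_u(b)$-adic order exactly $k$ whose initial form in the associated graded ring $\gr_{I_u(b)}(R)=\bigoplus_{n\ge 0}I_u(b)^n/I_u(b)^{n+1}$ is a non-zero-divisor. This rests on the classical fact that $\gr_{I_u(b)}(R)$ is an integral domain---a consequence of the irreducibility of the exceptional fibre of the blow-up of $V(I_u(b))$ (a Grassmann-type variety), extractable from the normality of the Rees algebra (Theorem~\ref{wellknown}(4)) together with Theorem~\ref{classic}(1). For each $j\in D_1$ one has $I_{t_j}(a_j)\not\subseteq I_u(b)^{2}$: when $t_j=u$, the generators have degree $u<2u$; when $t_j>u$, every generator involves a variable from some row $>u$, whereas $I_u(b)^{2}\subseteq K[x_{ij}:i\le u]$. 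Picking $\delta_j\in I_{t_j}(a_j)\setminus I_u(b)^{2}$ and setting $g=\prod_{j\in D_1}\delta_j\in I_{D_1}$, the domain property forces $g^{\ast}=\prod_{j}\delta_j^{\ast}\ne 0$ in $\gr_{I_u(b)}(R)_k$ and hence a non-zero-divisor. This yields $I_u(b)^{e}:(g)=I_u(b)^{e-k}$, and the required inclusion follows.

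The main obstacle is making the domain property of $\gr_{I_u(b)}(R)$ available inside the paper's combinatorial framework. A self-contained alternative is to prove the monomial analogue $J_S:J_D\subseteq J_T$ directly---using the decomposition of $J_u(b)$ into monomial primes from Theorem~\ref{in-powwer-primary} and a $P_i$-degree count on NE-tableaux of pattern $D$---and then deduce the lemma from $I_T\subseteq I_S:I_D$ combined with the general inclusion $\ini(I_S:I_D)\subseteq \ini(I_S):\ini(I_D)=J_S:J_D$.
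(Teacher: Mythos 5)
Your overall strategy is sound and, at its core, the same as the paper's: reduce via the primary decomposition $I_S=\bigcap_{u,b}I_u(b)^{e_{ub}(S)}$ of Theorem~\ref{primary} and the additivity of $e_{ub}$, distribute the colon over the intersection, and handle each local problem $I_u(b)^{e}:I_D$ by splitting off the factors outside $I_u(b)$ (where primariness of $I_u(b)^e$ suffices) from those inside $I_u(b)$ (where you need to shave off one power of $I_u(b)$ for each such factor). The paper abbreviates further by inducting on $|D|$ and reducing to $D$ a singleton, so that the combinatorial bookkeeping reduces to two one-line equalities $I_u(b)^k:I_t(a)=I_u(b)^{k-1}$ or $I_u(b)^k$, attributed to $I_u(b)$ having primary powers; you handle the full $D$ in one pass and make the associated-graded/domain mechanism explicit. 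Your closing ``self-contained alternative'' via $\ini(I_S:I_D)\subseteq J_S:J_D\subseteq J_T=\ini(I_T)$ is a genuinely different route that the paper does not take.

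There is one concrete error in the degree argument. You assert that $I_u(b)^2\subseteq K[x_{ij}:i\le u]$ when arguing $I_{t_j}(a_j)\not\subseteq I_u(b)^2$ for $t_j>u$; this is false, since $I_u(b)^2$ is an ideal of $R$ and contains, for instance, $x_{m1}\cdot(\text{any product of two $u$-minors})$. As stated, your argument also collapses when $t_j\ge 2u$, where the degree bound $t_j<2u$ fails. The correct observation is an easy multigrading argument: with respect to the $\ZZ^m$-grading $\deg x_{ij}=e_i$, a $t$-minor of $X_t(a)$ has degree exactly $1$ in the first row, whereas every nonzero element of $I_u(b)^2$ has degree at least $2$ in each of rows $1,\dots,u$. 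Hence $I_{t_j}(a_j)\not\subseteq I_u(b)^2$ for every $t_j\ge u$, which is all you need. With that fix, and taking the fact that $\gr_{I_u(b)}(R)$ is a domain as given (it is \cite[(9.17)]{BV}, already used by the paper in the proof of Lemma~\ref{prime}; it does \emph{not} follow from normality of the Rees algebra plus Theorem~\ref{classic}(1) alone, so better to cite it directly), your proof is correct.
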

\begin{proof} By induction on the cardinality of $D$, we may assume right away $D$ is a singleton.  Using Theorem \ref{primary} the desired equality boils down to the proof that for every $k>0$ one has $I_u(b)^k:I_t(a)=I_u(b)^{k-1}$ if $(b,u)\leq (t,a)$, and $I_u(b)^k:I_t(a)=I_u(b)^{k}$ otherwise. Both equalities follow from the fact that $I_u(b)$ has primary powers. 
\end{proof}

\begin{lemma}
\label{assopri}
Let $(t,a),(u,b)\in \NN_+^2$ such that   $t\le u$, $a \le b$ and $u+b\leq n+1$.  Then $I_t(a)I_u(b)\subset I_t(b)I_u(a)$. Actually, $I_t(a)$ is an associated prime to $R/I_t(b)I_u(a)$. 
\end{lemma}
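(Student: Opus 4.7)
The plan is to deduce both statements from the primary-decomposition formula of Theorem~\ref{primary}, supplemented by a short multigrading argument producing the separator needed for the associated prime.

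For the inclusion, consider the NE-patterns $S=\bigl((t,a),(u,b)\bigr)$ and $S'=\bigl((u,a),(t,b)\bigr)$, properly ordered (the degenerate cases $t=u$ or $a=b$ are trivial), so that $I_t(a)I_u(b)=I_S$ and $I_t(b)I_u(a)=I_{S'}$. A direct case-by-case comparison of $e_{vc}(S)$ and $e_{vc}(S')$ shows that $e_{vc}(S)\ge e_{vc}(S')$ for every $(v,c)$, with strict inequality precisely on the antidiagonal rectangle $t<v\le u$, $a<c\le b$, where $e_{vc}(S)=1$ and $e_{vc}(S')=0$. Therefore
\[
I_S=\bigcap_{v,c}I_v(c)^{e_{vc}(S)}\subset\bigcap_{v,c}I_v(c)^{e_{vc}(S')}=I_{S'}.
\]

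For the associated-prime claim, set $P=I_t(a)$ and aim to show that the $P$-primary component of $I_{S'}$ appearing in the decomposition is irredundant. Since $e_{ta}(S')=2$, that component is $I_t(a)^2$, which is $P$-primary by Theorem~\ref{classic}. Localizing at $P$ kills every component $I_v(c)^{e_{vc}(S')}$ with $I_v(c)\not\subset P$, and one checks that $I_v(c)\subset I_t(a)$ is equivalent to $v\ge t$ and $c\ge a$ (Laplace expansion gives ``$\Leftarrow$''; comparing minimal degrees when $v<t$, and exhibiting a $v$-minor with left-most column smaller than $a$ when $c<a$, gives ``$\Rightarrow$''). Reading off the nonzero values of $e_{vc}(S')$ inside the surviving region leaves, besides $I_t(a)^2$, the ideals $I_v(a)$ for $t<v\le u$ and $I_t(c)$ for $a<c\le b$, each with exponent $1$. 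The containments $I_{v+1}(a)\subset I_v(a)$ (by Laplace expansion) and $I_t(c+1)\subset I_t(c)$ (trivially) collapse these intersections to give
\[
(I_{S'})_P \;=\; (I_t(a)^2)_P \,\cap\, (I_u(a))_P \,\cap\, (I_t(b))_P ,
\]
so that $P\in\operatorname{Ass}(R/I_{S'})$ if and only if $I_u(a)\cap I_t(b)\not\subset I_t(a)^2$.

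To exhibit such an element, let $\sigma$ be any $(u,b)$-minor. Then $\sigma\in I_u(b)\subset I_u(a)$ (trivially, $X_u(b)\subset X_u(a)$), and $\sigma\in I_t(b)$ via Laplace expansion of $\sigma$ along the first $t$ rows, which writes it as an $R$-linear combination of $t$-minors of rows $1,\ldots,t$ with column indices in $\{b,\ldots,n\}$. On the other hand, with respect to the $\NN^m$-multigrading in which $x_{ij}$ carries degree $e_i$, every monomial of $\sigma$ has row-$1$ degree $1$, whereas every monomial of an element of $I_t(a)^2$ has row-$1$ degree at least $2$; hence $\sigma\notin I_t(a)^2$. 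This proves the irredundancy and therefore that $I_t(a)$ is associated to $R/I_{S'}$. The most delicate step I anticipate is the localization-and-collapse analysis that produces the clean formula for $(I_{S'})_P$; once that is in hand, the separator $\sigma$ is essentially forced by part~(1), which already says that any $(u,b)$-minor multiplies $I_t(a)$ into $I_{S'}$.
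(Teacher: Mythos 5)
Your argument is correct. The inclusion $I_t(a)I_u(b)\subset I_t(b)I_u(a)$ is proved exactly as in the paper: reduce via Theorem \ref{primary} to the inequality $e_{vc}(S)\ge e_{vc}(S')$, and your identification of the locus of strict inequality (the rectangle $t<v\le u$, $a<c\le b$) is accurate. For the associated-prime claim you take a genuinely different route. The paper establishes the single colon identity $\bigl(I_t(b)I_u(a)\bigr):I_u(b)=I_t(a)$ --- the inclusion $\supseteq$ is precisely part one of the lemma, and $\subseteq$ is attributed to the primeness of $I_t(a)\supseteq I_t(b)I_u(a)$ --- and then uses the standard fact that a prime arising as an ideal quotient is associated. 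You instead localize at $P=I_t(a)$, prune the decomposition of Theorem \ref{primary} down to the three components $I_t(a)^2$, $I_u(a)$, $I_t(b)$, and exhibit an explicit separator: a $(u,b)$-minor $\sigma$ lies in the latter two but not in $I_t(a)^2$ by the row-multidegree count. Both proofs rest on $I_t(a)^2$ being $P$-primary (Theorem \ref{classic}); in fact your multigrading observation that a $u$-minor has row-one degree $1<2$, hence $I_u(b)\not\subseteq I_t(a)^2$, is exactly the detail one needs to make the paper's terse ``$\subseteq$ follows from primeness'' step airtight (primeness alone does not suffice, since $I_u(b)\subseteq I_t(a)$). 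So your version is longer but more transparent, and it produces a concrete witness $\sigma$ with $(I_{S'}:\sigma)$ being $P$-primary. Two cosmetic caveats: your ``iff'' criterion should formally be read in $R_P$, but since $I_t(a)^2$ is $P$-primary it contracts from $R_P$ to itself, so checking $\sigma\notin I_t(a)^2$ in $R$ suffices, as you implicitly use; and in the degenerate cases $t=u$ or $a=b$ one of the components $I_u(a)$, $I_t(b)$ coincides with $P$ or is absorbed, but the same separator argument goes through.
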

\begin{proof}  For the inclusion $I_t(a)I_u(b)\subset I_t(b)I_u(a)$, in view of Theorem \ref{primary}  it is enough to show that $e_{vc}(S) \geq e_{vc}(T)$ for every $(v,c)$ where $S=\{(t,a), (u,b)\}$ and $T=\{(t,b), (u,a)\}$, and this is easy.  

The inclusion just proved shows one inclusion of the equality $\bigl(I_t(b)I_u(a)\bigr):I_u(b)=I_t(a)$. The other follows from the fact that $I_t(a)$ is prime and contains $I_t(b)I_u(a)$.  
\end{proof} 

Now we are ready to prove: 

\begin{proposition}
\label{irred} 
Given $S$, let  $Y$ be the set of the elements $(t,a)\in \NN_{+}^2$, $(t,a)\notin S$, such that there exists $(u,b)\in  \NN_{+}^2$ for which  $(t,b),(u,a)\in S$ and $t<u$, $a<b$. 
Then we have the following  primary decomposition:  
$$
I_S = \bigcap_{(v,c)\in S \cup Y}I_v(c)^{e_{vc}(S)}
$$ 
which is irredundant  provided all the points $(u,b)$ above can be taken so that $u+b\leq  n+1$. 
In particular, for fixed $S$, the given primary decomposition above is irredundant if $n$ is sufficiently large, and in this case all powers $I_S^k$ have the same associated prime ideals as $I_S$.
\end{proposition}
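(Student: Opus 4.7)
The plan is to start from the primary decomposition $I_S=\bigcap_{(u,b)} I_u(b)^{e_{ub}(S)}$ of Theorem~\ref{primary} and, under the hypothesis, pin down exactly which components are irredundant: those indexed by $(v,c)\in S\cup Y$ should be irredundant, and the rest should be redundant. Irredundancy will be established by exhibiting, for each $(v,c)\in S\cup Y$, an element $r\in R$ with $(I_S:r)=I_v(c)$, so that $I_v(c)$ becomes an associated prime of $R/I_S$.

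For the redundancy, suppose $(v,c)\notin S\cup Y$ but $e_{vc}(S)>0$. The definition of $Y$ together with $(v,c)\notin S$ forces either (a) no $(v,b')\in S$ with $b'>c$, or (b) no $(u',c)\in S$ with $u'>v$. In case (a), any contributor $(t_i,a_i)$ to $e_{vc}(S)$ must have $t_i\ge v+1$: if $t_i=v$ and $a_i\ge c$, then $a_i=c$ would give $(v,c)\in S$ and $a_i>c$ would violate (a). Therefore $e_{v+1,c}(S)=e_{vc}(S)$, and since $I_{v+1}(c)\subset I_v(c)$ we get $I_{v+1}(c)^{e_{v+1,c}(S)}\subset I_v(c)^{e_{vc}(S)}$, making the $(v,c)$-component redundant; case (b) is symmetric.

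For irredundancy when $(v,c)\in S$, let $S^*$ be $S$ with one copy of $(v,c)$ removed and take $h=\prod_{i\in S^*}[a_i,a_i+1,\dots,a_i+t_i-1]$. A factor-count gives $h\in I_{u'}(b')^{e_{u'b'}(S^*)}$, and a matching upper bound via initial ideals---the row-$u'$ variables of $\ini(h)$ sit at the rigid columns $a_i+u'-1$, and only those with $a_i\ge b'$ can anchor a length-$u'$ diagonal of $J_{u'}(b')$---shows $h\notin I_{u'}(b')^{e_{u'b'}(S^*)+1}$. Using that $I_{u'}(b')^{e_{u'b'}(S)}$ is $I_{u'}(b')$-primary (Theorem~\ref{classic}), the colon $(I_{u'}(b')^{e_{u'b'}(S)}:h)$ equals $R$ when $(u',b')\not\le (v,c)$ (where $e_{u'b'}(S^*)=e_{u'b'}(S)$) and equals the prime $I_{u'}(b')$ when $(u',b')\le (v,c)$ (where $h$ lies exactly one power below the component). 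Intersecting these yields $I_v(c)$, the smallest prime among them. When $(v,c)\in Y$ with witness $(v,b),(u,c)\in S$ and $u+b\le n+1$, take $f=[b,b+1,\dots,b+u-1]$; the primary decomposition $I_v(b)I_u(c)=I_v(c)^2\cap I_v(b)\cap I_u(c)$ lets me verify $(I_v(b)I_u(c)):f=I_v(c)$, since $f\in I_v(b)\cap I_u(c)$ but sits only in the first power of $I_v(c)$, so the only nontrivial constraint $xf\in I_v(c)^2$ forces $x\in I_v(c)$. Setting $T=S\setminus\{(v,b),(u,c)\}$ and $r=fh$ with $h$ the leftmost-minor product over $T$, the same tightness analysis applied to $\ini(fh)$ yields $(I_S:fh)=I_v(c)$.

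For the final statement, large $n$ makes $u+b\le n+1$ automatic for every witness, so the hypothesis holds and the decomposition is irredundant. For the powers, $I_S^k=I_{S^{(k)}}$ where $S^{(k)}$ is the $k$-fold concatenation of $S$; since the definition of $Y$ depends only on the underlying set of pairs in the pattern, $Y(S^{(k)})=Y(S)$ and hence $S^{(k)}\cup Y(S^{(k)})=S\cup Y(S)$, so the associated primes of $I_S^k$ coincide with those of $I_S$. The main technical obstacle is the simultaneous tightness of $h$ (and $fh$) across all relevant $(u',b')$: the factor-count lower bound is immediate, but the matching upper bound rests on the rigid column geometry of the leftmost diagonals and must be handled by a careful row-$u'$ argument on $\ini(h)$.
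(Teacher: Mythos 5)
Your proof is correct, and for the irredundancy part it takes a genuinely different route from the paper's. The redundancy step (if $(v,c)\notin S\cup Y$ then $e_{vc}(S)$ equals $e_{v+1,c}(S)$ or $e_{v,c+1}(S)$) and the treatment of the powers $I_S^k$ coincide with the paper's. For irredundancy, however, the paper constructs no explicit colon elements: for $(v,c)\in S$ it invokes the general fact that each nonzero prime $P_i$ in a product $P_1\cdots P_r$ of primes of a noetherian domain is associated to the quotient by the product (proved by localizing at $P_i$), and for $(v,c)\in Y$ it colons by the \emph{ideal} $I_D$ with $D=S\setminus\{(v,b),(u,c)\}$, using Lemma \ref{colon} to reduce to $I_v(b)I_u(c)$ and then Lemma \ref{assopri} to get $\bigl(I_v(b)I_u(c)\bigr):I_u(b)=I_v(c)$. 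You instead exhibit explicit witnesses --- the products of leftmost minors $h=\prod_i[a_i,a_i+1,\dots,a_i+t_i-1]$ and, in the $Y$-case, $fh$ with $f=[b,\dots,b+u-1]$ (which exists precisely because $u+b\le n+1$) --- and compute $(I_S:r)=I_v(c)$ componentwise in the decomposition of Theorem \ref{primary}. The crux is the tightness $h\notin I_{u'}(b')^{e_{u'b'}(S^*)+1}$, which your row-$u'$ count delivers: a monomial of $J_{u'}(b')^k$ must contain at least $k$ variables $x_{u',j}$ with $j\ge b'+u'-1$, while $\ini(h)$ contains exactly $e_{u'b'}(S^*)$ of them, and $\ini(I_{u'}(b')^k)=J_{u'}(b')^k$ by Theorem \ref{classic}; the analogous count for $\ini(fh)$ gives $e_{u'b'}(T)+1$ when $u'\le v$ and $b'\le c$, so all the colons come out as claimed. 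The paper's route is shorter and sidesteps any initial-ideal computation; yours is more constructive, producing a concrete element realizing each associated prime, at the price of the diagonal-counting lemma.
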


\begin{proof} The equality holds because of Theorem \ref{primary} and because if $(v,c)\not\in S\cup Y$ then $e_{vc}(S)$ is either equal to $e_{v+1,c}(S)$ 
or $e_{v,c+1}(S)$. 

It remains to show that the decomposition is irredundant  under the extra assumption. 
We can equivalently prove  that every prime $I_t(a)$ with $(t,a)\in S\cup Y$ is associated to $R/I_S$. For  $(t,a)\in S$ this follows from a general fact: for prime ideals $P_1,\dots,P_r\neq 0$ in a noetherian domain $A$ each $P_i$ is associated to $A/I$, $I=P_1\cdots P_r$. This follows easily by localization; one only needs that $IA_{P_i}\neq 0$ for all $i$.

Now let $(t,a)\in Y$ and  $(t,b),(u,a)\in S$ and such that $t<u$ and $a<b$ and $u+b\leq n+1$. Set $D=S\setminus \{ (t,b),(u,a)\}$. 
Then by  \ref{colon}  we have $I_S:I_D=I_t(b)I_u(a)$, and by \ref{assopri} $I_t(a)$ is  associated   to $R/I_t(b)I_u(a)$. It follows that  $I_t(a)$ is  associated   to $R/I_S$ as well.

For the last statement we note that the set $Y$ does not change if we pass from $I_S$ to $I_S^k$. 
\end{proof} 

Let us illustrate Theorem \ref{primary} and Proposition \ref{irred} by two examples. 

\begin{example} 
Let $n\geq 5$ and $S=\{(3,1),(3,3),(2,3),(1,4)\}$ so that 
$$
I_S=I_3(1)I_3(3)I_2(3)I_1(4).
$$

The ideal and the values $e_{ub}(S)$ are given by the following tables:
$$%\begin{center}
	\begin{tabular}{|c|c|c|c|c|c|}
\hline
   & \phantom{$\bullet$ }  & & $\bullet$ &  \phantom{$\bullet$} &  \phantom{$\bullet$} \\ \hline
 & & $\bullet$ & & &  \phantom{$\bullet$} \\ \hline
 $\bullet$ & & $\bullet$ & & &  \phantom{$\bullet$} \\ \hline
\end{tabular}	
\qquad\qquad 
 \begin{tabular}{|c|c|c|c|c|c|}
\hline
\textcircled{4} & 3 & \textcircled{3} & \boxed{1} &  0 & 0\\ \hline
\textcircled{3} & 2&   \boxed{2} & 0 & 0 & 0 \\ \hline
\boxed{2} & 1&  \boxed{1} & 0 & 0 & 0 \\ \hline
\end{tabular}	
$$%\end{center}
The  boxed and circled  values are the essential ones and  give rise to the irredundant components.  
The boxed  correspond to elements in $S$ and the circled   to elements in $Y$.  
Hence a irredundant primary decomposition of $I_S$ is: 
$$
I_S=I_1(4) \cap  I_1(3)^3  \cap I_2(3)^2  \cap I_3(3)  \cap I_1(1)^4  \cap I_2(1)^3  \cap I_3(1)^2.
$$
\end{example}

\begin{example} 
If the criterion in Proposition \ref{irred} does not apply, $I_t(a)$ may nevertheless be associated to $R/I_S$. We choose $n=4$.
	
First, let $S=\{(3,1),(2,2),(1,3)\}$.  Then $(1,1)\in Y$ and the corresponding ``$(u,b)$" is $(3,3)$ which does not satisfy $u+b\leq n+1$. Unexpectedly, $I_1(1)$ is associated to $I_S$. 

Second, let   $S=\{(3,1),(1,3)\}$.  Again $(1,1)\in Y$, and it has the same corresponding $(u,b)$. 
But in this case $I_1(1)$ is  not  associated to $I_S$. 
\end{example}

\section{The northeast straightening law} 

It is now crucial to have a ``normal form'' for elements of $I_S$. For this purpose we select a $K$-basis that involves  the natural system of generators, the NE-tableaux $\Delta$ of pattern $S$. It has already become apparent in the proof of Lemma \ref{harder} that we cannot simply require that $\Delta$ is a standard tableau, and the following example for $S=\bigl( (1,2),(3,3)\bigr)$shows this explicitly:
$$
[1 4][2 3 4]=[1 3 4][2 4]-[1 2 4][3 4].
$$
The difficulty is that the transformations occurring in the standard straightening procedure do not respect the bounds of the NE-ideals in general. However, they do so in an important special case to which we will come back.

Let $M\Delta$ be a NE-tableau. A monomial $\la c_1,\dots,c_t\ra$ is called a \emph{diagonal of type (t,a) in $M\Delta$} if $c_1\ge a$ and $\la c_1,\dots,c_t\ra \mid \ini(M\Delta)=M\ini(\Delta)$.

\begin{definition}
Let $S=\bigl((t_1,a_1),\dots,(t_w,a_w)\bigr)$ be a NE-pattern. A NE-tableau $M\Delta$ of pattern $S$, $\Delta=\delta_1\cdots\delta_w$, is \emph{$S$-canonical} if
$\ini(\delta_j)$ is the lexicographically smallest diagonal of type $(t_j,a_j)$ in the NE-tableau $M\delta_1\cdots\delta_j$ of pattern $\bigl((t_1,a_1,\dots,(t_j,a_j)\bigr)$ for $j=1,\dots,w$. 
\end{definition}

As an example we consider the monomial $M=x_{11}x_{12}x_{13}x_{23}x_{24}x_{25}x_{35}$, graphically symbolized by the following table:
\begin{center}
\begin{tabular}{|c|c|c|c|c|}
\hline
$\bullet$ & $\bullet$ & $\bullet$ & &\\ \hline
& & $\bullet$ & $\bullet$ & $\bullet$\\ \hline
&&&&$\bullet$ \\
\hline
\end{tabular}	
\end{center}
It depends on the pattern $S$ which $S$-canonical tableau has $M$ as its initial monomial.
\begin{enumerate}
\item For $S=\bigl((2,1),(3,2),(2,2) \bigr)$ the canonical tableau with  initial monomial $M$ is
$$
[1 3][2 4 5][3 5].
$$
\item For $S=\bigl((2,1),(2,2),(3,3) \bigr)$ it is
$$
[1 3][2 5][3 4 5].
$$
\item For $S= \bigl((2,1),(2,2),(2,3) \bigr)$ it is
$$
x_{35}[1 3][2 4][3 5].
$$
\end{enumerate}

Note that different canonical NE-tableaux of the same pattern are $K$-linearly independent since they have different initial monomials: if the pattern $S$ is fixed, then an $S$-canonical tableau is uniquely determined by its initial monomial. In fact, the diagonals that are split off successively are uniquely determined, and each diagonal belongs to a unique minor.

We can now formulate the \emph{NE-straightening law}:

\begin{theorem}\label{NEstr}
Let $S=\bigl((t_1,a_1),\dots,(t_w,a_w)\bigr)$ be a NE-pattern and $x\in I_S$. then there exist uniquely determined $S$-canonical NE-tableaux $M_i\Gamma_i$, $i=0,\dots,p$,  and coefficients $\lambda_i\in K$ such that
$$
x=\lambda_0M_0\Gamma_0+\lambda_1M_1\Gamma_1+\dots+\lambda_pM_p\Gamma_p
$$
and
$$
\ini(x)=\ini(M_0\Gamma_0)>\ini(M_1\Gamma_1)>\dots> \ini(M_p\Gamma_p).
$$
\end{theorem}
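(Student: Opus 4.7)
The plan is to prove the statement by a standard leading-term reduction, using the well-ordering of monomials by $<_\lex$. Uniqueness is almost immediate from the observation made after the definition: an $S$-canonical NE-tableau is determined by its initial monomial, so in comparing two expressions of $x$ satisfying the strict decrease on initial monomials, the leading terms must share the same initial monomial, hence the same $S$-canonical tableau $M_0\Gamma_0$, hence the same coefficient $\lambda_0$; subtracting and repeating yields equality of all subsequent terms. For existence, it therefore suffices to exhibit, for each $x\in I_S$, a single $S$-canonical NE-tableau $M_0\Gamma_0$ of pattern $S$ with $\ini(M_0\Gamma_0)=\ini(x)$: setting $\lambda_0$ equal to the coefficient of $\ini(x)$ in $x$, the element $x-\lambda_0 M_0\Gamma_0$ still lies in $I_S$ (since $\Gamma_0\in I_S$ by construction) but has strictly smaller initial monomial, and induction on the well-founded order of monomials below $\ini(x)$ completes the argument.

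The construction of $M_0\Gamma_0$ from $N=\ini(x)$ proceeds by reverse greedy extraction, directly mimicking the definition of $S$-canonical. By equation \eqref{ini} of Theorem \ref{primary}, $N\in J_S=J_{t_1}(a_1)\cdots J_{t_w}(a_w)$. I would set $N_w=N$ and, for $j=w,w-1,\dots,1$, take $D_j$ to be the lexicographically smallest diagonal of type $(t_j,a_j)$ dividing $N_j$, then define $N_{j-1}=N_j/D_j$. Let $\delta_j$ be the unique $t_j$-minor of $X_{t_j}(a_j)$ with $\ini(\delta_j)=D_j$, and put $M_0=N_0$ and $\Gamma_0=\delta_1\cdots\delta_w$. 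Then $\ini(M_0\Gamma_0)=N$, and the prescribed lex-minimality of each $D_j$ in $N_j=M_0\cdot\ini(\delta_1)\cdots\ini(\delta_j)$ is exactly the $S$-canonical condition.

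The main obstacle is to verify that the algorithm never stalls, i.e., that at each stage $N_j$ still lies in $J_{t_1}(a_1)\cdots J_{t_j}(a_j)$, so that a divisor diagonal of type $(t_j,a_j)$ does exist. This is essentially the inductive step already carried out in the proof of inclusion \eqref{crucial} in Theorem \ref{primary}: Lemmas \ref{easy} and \ref{harder}, applied according to the pattern ordering of $S$ (which dictates whether the interaction of $(t_w,a_w)$ with an earlier entry $(t_i,a_i)$ falls into the case $u>t$ or $u\le t$ of those lemmas), show that removing the lexicographically smallest $(t_w,a_w)$-diagonal from a monomial in $J_S$ leaves a monomial in $J_T$ for $T=S\setminus\{(t_w,a_w)\}$. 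Iterating this reduction $w$ times produces the diagonals $D_w,\dots,D_1$ and completes the construction of $M_0\Gamma_0$, and with it the proof.
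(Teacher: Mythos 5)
Your proof is correct and follows essentially the same route as the paper: reduce to producing a single $S$-canonical tableau $M_0\Gamma_0$ with $\ini(M_0\Gamma_0)=\ini(x)$, obtain it by the recursive extraction of lexicographically smallest diagonals already carried out in the proof of Theorem \ref{primary} (via Lemmas \ref{easy} and \ref{harder}), and get uniqueness from the fact that an $S$-canonical tableau is determined by its initial monomial. Your write-up merely makes explicit the non-stalling verification that the paper delegates to the earlier proof.
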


\begin{proof}
Let $\lambda_0$ be the initial coefficient of $x$. It is enough to show the existence of $M_0\Gamma_0$ since $\ini(x-\lambda_0M_0\Gamma_0)<\ini(x)$, and we can apply induction.	

Clearly $\ini(x)\in\ini(I_S)$. The factorization of the monomial $\ini(x)$ constructed recursively in the proof of Theorem \ref{primary} is exactly the factorization that gives it the structure $M_0\ini(\Gamma_0)$ for an $S$-canonical NE-tableau of pattern $S$: it starts by extracting the lexicographically smallest diagonal $D_w$ of length $t_w$ from $\ini(x)$, and applies the same procedure to $\ini(x)/D_w$ recursively. As pointed out above, this factorization belongs to a unique $S$-canonical tableau.
\end{proof}

We call Theorem \ref{NEstr} a straightening law, since it generalizes the ``ordinary'' straightening law to some extent:

\begin{theorem}\label{nested}
Suppose the NE-pattern $S=\bigl((t_1,a_1),\dots,(t_w,a_w)\bigr)$ satisfies the condition\discuss{CH} $t_i\ge t_{i+1}$ for $i=1,\dots,w-1$, and let $\Delta$ be pure NE-tableau of pattern $S$. Then the representation
$$
\Delta=\Delta_0+\lambda_1\Sigma_1+\dots+\lambda_p\Sigma_p
$$
of Theorem \ref{straight}(2) is the $S$-canonical representation.
\end{theorem}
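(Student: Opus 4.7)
The plan is to compare the $S$-canonical representation of $\Delta$ provided by Theorem~\ref{NEstr} with the standard straightening of $\Delta$ provided by Theorem~\ref{straight}(2). Write the $S$-canonical representation as $\Delta=\sum_i\lambda_i M_i\Gamma_i$. A degree count will force $M_i=1$ for every $i$: indeed $\deg\Delta=t_1+\cdots+t_w=\deg\Gamma_i$, so $\deg M_i=0$. Hence each $\Gamma_i=\gamma^{(i)}_1\cdots\gamma^{(i)}_w$ is an $S$-canonical pure NE-tableau of pattern $S$ of the same shape as $\Delta$.

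The key lemma to prove is: under the nested hypothesis $t_1\ge\cdots\ge t_w$, every $S$-canonical pure NE-tableau $\Gamma=\gamma_1\cdots\gamma_w$ of pattern $S$ is already a standard tableau, i.e.\ $\gamma_1\le_\str\cdots\le_\str\gamma_w$. The underlying observation, immediate from the convention $x_{ij}>_\lex x_{uv}$ whenever $i<u$ or ($i=u$ and $j<v$), is that the lex-smallest diagonal $\la d_1,\dots,d_t\ra$ of type $(t,a)$ dividing a monomial $N$ is obtained by \emph{greedy maximization}: pick $d_1$ as large as possible with $d_1\ge a$ and $x_{1d_1}\mid N$, then $d_2>d_1$ as large as possible with $x_{2d_2}\mid N/x_{1d_1}$, and so on (decreasing any $d_k$ strictly increases the lex value of the diagonal monomial).

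Write $c_{i,k}$ for the $k$-th column of $\gamma_i$, and fix $j$ with $k\le t_j$. Under the nested hypothesis every $\gamma_i$ with $i\le j$ has a row-$k$ entry, since $t_i\ge t_j\ge k$. The $S$-canonical condition at step $j$ then forces the greedy-max extraction from $\ini(\gamma_1\cdots\gamma_j)$ to equal $\la c_{j,1},\dots,c_{j,t_j}\ra$, so (with the convention $c_{j,0}:=a_j-1$)
$$
c_{j,k}=\max\{\,c_{i,k}\mid 1\le i\le j,\ c_{i,k}>c_{j,k-1}\,\}.
$$
For any $i<j$, either $c_{i,k}>c_{j,k-1}$ and then $c_{i,k}\le c_{j,k}$ by the maximality, or $c_{i,k}\le c_{j,k-1}<c_{j,k}$. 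Both cases give $c_{i,k}\le c_{j,k}$, and combined with $t_i\ge t_j$ this is $\gamma_i\le_\str\gamma_j$. Specializing to $j=i+1$ for $i=1,\dots,w-1$ yields the chain $\gamma_1\le_\str\cdots\le_\str\gamma_w$.

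With the lemma in hand, each $\Gamma_i$ is standard of shape $(t_1,\dots,t_w)$, so the $S$-canonical expansion of $\Delta$ is a linear combination of standard tableaux of this shape; by the uniqueness in Theorem~\ref{straight}(2) it must coincide with the standard straightening of $\Delta$. The main obstacle is the greedy-max analysis in the lemma; the nested hypothesis is essential there, since without it some $\gamma_i$ with $i<j$ could have fewer than $k$ rows and fail to contribute to row $k$, allowing the greedy extraction to pull an entry from a ``wrong'' factor and break the standard ordering.
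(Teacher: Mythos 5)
Your proof is correct, but it runs in the converse direction to the paper's. The paper's proof tracks the ordinary straightening procedure and checks that each application of the two-minor relation of Theorem~\ref{straight}(1) keeps both factors inside their respective NE ideals (using $t\ge u$, $a\le b$), so the resulting standard tableaux are still pure NE-tableaux of pattern $S$; the paper then implicitly relies on the fact that a \emph{standard} pure NE-tableau of a nested pattern is automatically $S$-canonical. You reverse this: a degree count removes the monomial factors $M_i$ from the $S$-canonical expansion (a short observation the paper does not need to make), and then you show that every $S$-\emph{canonical} pure NE-tableau of a nested pattern is standard, after which uniqueness of the standard straightening in Theorem~\ref{straight}(2) (distinct standard tableaux have distinct initial monomials) finishes the argument. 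Both proofs exploit the same underlying equivalence for nested $S$ (canonical pure NE-tableau $\iff$ standard pure NE-tableau of pattern $S$), just from opposite ends; yours is arguably the more self-contained variant, since the paper leaves the implication ``standard $+$ NE of nested pattern $\Rightarrow$ $S$-canonical'' unstated. One small point worth making explicit in your greedy lemma: top-down greedy maximization does not compute the lex-smallest diagonal of an arbitrary monomial (it can get stuck), so the justification ``decreasing any $d_k$ strictly increases the lex value'' needs the extra remark that the row-$k$ maximizer $c_{i_0,k}$ always completes to a diagonal by appending $c_{i_0,k+1},\dots,c_{i_0,t_j}$ from the \emph{same} factor $\gamma_{i_0}$ (which has at least $t_j$ rows by nestedness); this is exactly what turns the greedy description into the valid identity $c_{j,k}=\max\{c_{i,k}\mid i\le j,\ c_{i,k}>c_{j,k-1}\}$.
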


\begin{proof}
The only question that could arise is whether the representation is $S$-canonical. It is successively obtained from $\Delta$ by applying the straightening law to airs of minors:
$$
\delta_\sigma=(\delta\wedge \sigma)(\delta_\vee\sigma)+\lambda_1\eta_1\zeta_1+\dots+\lambda_q\eta_q\zeta_q,\qquad
\eta_i\le_\str \delta_\wedge\sigma, \ \delta_i\vee\sigma \le_\str \zeta_i, \ i=1,\dots,q,
$$
as in Theorem \ref{straight}(1). Therefore it is enough to consider the case $w=2$, $S=((t,a),(u,b))$, $a\le b$, $t\ge u$, $\delta=[d_1\dots d_t]$, $\sigma=[s_1 \dots s_u]$. The smallest column index is $\min(d_1,s_1)\ge a$. So all minors $\zeta_i$ belong to $I_t(a)$. The minors $\eta_i$ satisfy the inequalities $\eta_i\ge_\str \delta$ and $\eta_i\ge_\str \sigma$. Therefore they belong to $I_u(b)$.
\end{proof}

For later use we single out two special cases of Theorem \ref{NEstr}. 
\begin{enumerate}
\item For $\delta\in I_t(a)$, $|\delta|=t$, $\sigma\in I_u(b)$, $|\sigma|=u$, there is an equation
\begin{equation}
\delta_\sigma=\delta_0\sigma_0+\lambda_1\delta_1\sigma_1+\dots \lambda_p\delta_p\sigma_p\label{deg2_1}
\end{equation}
with $\lambda_1,\dots,\lambda_p\in K$ and canonical NE-tableaux $\delta_0\sigma_0,\dots\delta_p\sigma_p$ of pattern $((t.a),(u,b))$ and $\ini(\delta\sigma)=\ini(\delta_0\sigma_0)>\ini(\delta_1\sigma_1)+\dots\ini(\delta_p\sigma_p)$.

\item With the same notation for $\delta$, for every indeterminate $x_{uv}$ there is an equation
\begin{equation}
x_{uv}\delta=x_{u_0v_0}\delta_0+\lambda_1x_{u_1v_1}\delta_1+ \dots+x_{u_pv_p}\delta_p\label{deg2_2}
\end{equation} 
with $\lambda_1,\dots,\lambda_p\in K$, $x_{u_0v_0}\delta_0,\dots,x_{u_pv_p}\delta_p$ canonical of pattern $(t,a)$ and $\ini(x_{uv}\delta)=\ini(x_{u_0v_0}\delta_0)> \ini(x_{u_1v_1}\delta_1)>\ini(x_{u_pv_p}\delta_p)$. 
\end{enumerate}

Equation \eqref{deg2_2} is nothing but a linear syzygy of $t$-minors (unless it is a tautology). These syzygies have sneaked in through the use of the theorem that the $t$-minors form a  Gr\"obner  basis of $I_t(a)$.

We complement the discussion of canonical decompositions by showing that a non-canonical tableau can be recognized by comparing the factors pairwise.

\begin{lemma}\label{lem4}
Let $S$ be a NE-pattern and $M\Delta$, $\Delta=\delta_1\cdots \delta_w$, be a NE-tableau of pattern $S$. If $M\Delta$ is not $S$-canonical, then at least one of the following two cases occurs:
\begin{enumerate}
\item there exist a visor $x_{ij}$ of $M$ and an index $k$ such that $x_{ij}\delta_k$ is not NE-canonical of pattern $(t_k,a_k)$;
\item there exist indices $q$ and $k$, $q<k$, such that $\delta_q\delta_k$ is not $((t_q,a_q),(t_k,a_k))$-canonical.
\end{enumerate}
\end{lemma}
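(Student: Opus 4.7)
The plan is to pin the failure of $S$-canonicity on a single row of some $\ini(\delta_j)$ and then pair that row with one other ingredient of $M\Delta$. Let $j$ be the smallest index at which the canonicity test fails, so there exists a $(t_j,a_j)$-diagonal $D=\la c_1,\dots,c_{t_j}\ra$ dividing $M\ini(\delta_1)\cdots\ini(\delta_j)$ with $D<_{\lex}\ini(\delta_j)=\la b_1,\dots,b_{t_j}\ra$. Let $s$ be the first row where the two diagonals differ; since $D<_{\lex}\ini(\delta_j)$, we have $c_s>b_s$.

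The key combinatorial step is to locate one row $r\ge s$ at which replacing $b_r$ in $\ini(\delta_j)$ by $c_r$ still yields a strictly increasing sequence. I would take $r$ to be the smallest index in $\{s,s+1,\dots,t_j\}$ with $c_r<b_{r+1}$, using the convention $b_{t_j+1}=+\infty$; such an $r$ exists because the last row trivially qualifies. A short induction on the excluded earlier rows (one has $c_{r'}\ge b_{r'+1}$ for $s\le r'<r$ by minimality of $r$) gives $c_r>b_r$: if $r=s$ this is the defining property of $s$, while if $r>s$ then $c_r>c_{r-1}\ge b_r$. The swapped sequence
$$
E=\la b_1,\dots,b_{r-1},c_r,b_{r+1},\dots,b_{t_j}\ra
$$
is therefore a genuine $(t_j,a_j)$-diagonal (strictly increasing, with $b_1\ge a_j$), and it is lex smaller than $\ini(\delta_j)$ since they first disagree in row $r$ and $c_r>b_r$.

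It then remains to locate the variable $x_{r,c_r}$ inside the product $M\ini(\delta_1)\cdots\ini(\delta_j)$. Since $\ini(\delta_j)$ has $x_{r,b_r}\neq x_{r,c_r}$ as its unique row-$r$ factor, and $x_{r,c_r}$ divides $D$ and hence the whole product, $x_{r,c_r}$ must already divide $M\ini(\delta_1)\cdots\ini(\delta_{j-1})$. If $x_{r,c_r}\mid M$, then $E$ divides $x_{r,c_r}\ini(\delta_j)$, witnessing that $x_{r,c_r}\delta_j$ is not $(t_j,a_j)$-canonical, which is alternative (1) with $k=j$. Otherwise $x_{r,c_r}\mid\ini(\delta_{\ell})$ for some $\ell<j$, and $E$ divides $\ini(\delta_{\ell})\ini(\delta_j)$, so $\delta_{\ell}\delta_j$ fails the second canonicity step for the two-term pattern $((t_{\ell},a_{\ell}),(t_j,a_j))$ (the first step is automatic because a single minor only admits one diagonal of its shape). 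This yields alternative (2) with $q=\ell$ and $k=j$.

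The only delicate point in the argument is the combinatorial existence of the swap row $r$; everything else is bookkeeping about where the borrowed variable $x_{r,c_r}$ lives. Note that the two-term subpattern $((t_{\ell},a_{\ell}),(t_j,a_j))$ used in case (2) is itself a legitimate NE-pattern because the ordering condition in Definition \ref{patterns} is hereditary to subsequences, so the very statement of case (2) makes sense.
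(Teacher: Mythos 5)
Your argument is correct and follows essentially the same route as the paper's proof: locate a failing index $j$, swap a single column entry of $\ini(\delta_j)$ for one borrowed from the lex-smaller competing diagonal to produce a witness diagonal, and then split into cases according to whether the borrowed variable $x_{rc_r}$ sits in $M$ or in an earlier factor $\ini(\delta_\ell)$. The only immaterial differences are that the paper takes the failing index maximal rather than minimal and chooses the swap row as the largest $r$ with $b_r<c_r$, whereas you take the smallest admissible one; both choices yield a strictly increasing diagonal with the required properties.
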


\begin{proof}
We choose $k$ maximal with the property that $\ini(\delta_k)$ is not the lexicographically smallest $(t_k,a_k)$-diagonal dividing $\ini(M\delta_1\dots\delta_k)$. Set $t=t_k$, $\delta=[d_1 \dots d_t]$ and let $\la e_1 \dots e_t\ra$ be the lexicographically smallest such diagonal. Then choose $r$ maximal with the property that $d_r<e_r$. Since $x_{re_r}$ divides $\ini(M\delta_1\cdots \delta_k)$, at least one of the following two cases must hold:
\begin{enumerate}
\item $x_{re_r}\mid M$;
\item $x_{re_r}\mid \ini(\delta_q)$ for some $q<k$.
\end{enumerate} 
In the first case $x_{re_r}\delta_k$ is not $(t_k,a_k)$-canonical, and in the second case $\delta_q\delta_k$ fits case (2) of the lemma: $\la d_1 \dots d_{r-1} e_r d_{r+1} \dots e_t\ra $ is lexicographically smaller than $\la d_1 \dots d_t\ra$ and divides $\ini(x_{re_r}\delta_k)$ or $\ini(\delta_q\delta_k)$, respectively.
\end{proof}

Lemma \ref{lem4} and the equations \eqref{deg2_1} and \eqref{deg2_2} indicate that the $S$-canonical representation of an element $x\in I_S$ can be obtained by the successive application of quadratic relations. This is indeed true and will be formalized in the next section.

\section{The multi-Rees algebra}
The natural object for the simultaneous study of the ideals $I_S$ is the multi-Rees algebra
$$
\Rees=R[I_t(a)T_{ta}: 1\le t, a\le n,\    t+a\le n+1]
$$
where the $T_{ta}$ are new indeterminates It is a subalgebra of the polynomial ring 
$$R[T_{ta}: 1\le t, a\le n,\    t+a\le n+1],$$
 and the products of the ideals $I_t(a)$ appear as the coefficient ideals of the monomials in the indeterminates $T_{ta}$. These monomials are parametrized by the patterns $S$: for $S=((t_1,a_1),\dots,(t_w,a_w))$ we set
$$
T^S=T_{t_1a_1}\cdots T_{t_wa_w}.
$$
Then
$$
\Rees=\Dirsum_{S} I_ST^S.
$$
The monomial order on $R$ is extended to $R[T_{ta}: 1\le a\le n,\ 1\le t+a\le n+1]$ in an arbitrary way. The extension will be denoted by $<_\lex$ as well.
 
Alongside with $\Rees$ we consider the multi-Rees algebra $\Rees_{\ini}$ defined by the initial ideals $J_t(a)$:
$$
\Rees_{\ini}=R[J_t(a)T_{ta}: 1\le a\le n,\ 1\le t+a\le n+1].
$$
As always in this context, there is a second `` initial'' object that comes into play, namely the initial subalgebra of $\Rees$:
$$
\ini(\Rees)=\Dirsum_{S} J_ST^S.	
$$
(Recall that $J_S=\ini(I_S)$ by definition.) From Theorem \ref{primary} one can easily derive a first structural result on $\Rees$ and $\Rees_{\ini}$.

\begin{theorem}\label{Rees-1}
\leavevmode
\begin{enumerate}
\item With respect to any extension of the monomial order on $R$ to $R[T_{ta}: 1\le a\le n,\ 1\le t+a\le n+1]$ one has $\ini(\Rees)=\Rees_{\ini}$.
\item $\Rees$ and $\Rees_{\ini}$ are normal Cohen-Macaulay domains.
\end{enumerate}
\end{theorem}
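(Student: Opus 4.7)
The plan is to reduce (1) to the multigraded structure of $\Rees$ combined with Theorem \ref{primary}(1), and to prove (2) by first establishing normality of both algebras via the integral closedness statements in Theorem \ref{primary}, and then deducing Cohen-Macaulayness from Hochster's theorem for $\Rees_{\ini}$ together with a standard Gr\"obner deformation for $\Rees$.

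For (1), I would observe that $\Rees = \bigoplus_S I_S T^S$ carries an $\NN^N$-multigrading indexed by NE-patterns $S$ (equivalently, by the monomials $T^S$), and any extension of $<_\lex$ to $R[T_{ta}]$ is compatible with this multigrading. Taking initial parts componentwise yields $\ini(\Rees) = \bigoplus_S \ini(I_S)\, T^S = \bigoplus_S J_S\, T^S$, and by equation \eqref{ini} of Theorem \ref{primary} each piece satisfies $J_S = J_{t_1}(a_1)\cdots J_{t_w}(a_w)$. Hence $\ini(\Rees)$ is generated over $R$ by the elements $J_t(a) T_{ta}$, i.e., $\ini(\Rees) = \Rees_{\ini}$.

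For (2), both algebras are domains as subrings of the polynomial ring $R[T_{ta}]$. For normality I would invoke the standard characterization: a multi-Rees $R[I_\alpha T_\alpha]$ over a normal ring $R$ is normal if and only if every product $\prod_\alpha I_\alpha^{k_\alpha}$ is integrally closed. For $\Rees$ this is exactly the integral closedness of $I_S$ asserted in the last sentence of Theorem \ref{primary}. For $\Rees_{\ini}$ it amounts to showing that every $J_S$ is integrally closed, which follows from equation \eqref{ini-inter} of Theorem \ref{primary} combined with Theorem \ref{in-powwer-primary}: each $J_u(b)^k$ is a finite intersection $\bigcap_i P_i^k$ of ordinary powers of monomial primes generated by variables, so the $P_i^k$ are integrally closed, and finite intersections of integrally closed ideals remain integrally closed.

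To conclude the Cohen-Macaulay property, I note that $\Rees_{\ini}$ is a subring of $R[T_{ta}]$ generated by monomials, hence a normal affine semigroup algebra; Hochster's theorem then yields that $\Rees_{\ini}$ is Cohen-Macaulay. For $\Rees$ I would invoke the standard Gr\"obner deformation: $\Rees$ admits a $K[s]$-flat one-parameter family whose special fiber is $\ini(\Rees) = \Rees_{\ini}$ and whose generic fiber is $\Rees$, and the Cohen-Macaulay property descends from the special fiber to the generic one. The main subtlety I anticipate is verifying that this deformation argument applies to the algebra (not merely to a single ideal) in the multigraded setting, but this is handled by the weight-filtration construction that is by now standard in this area.
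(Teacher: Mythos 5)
Your proposal is correct and follows essentially the same route as the paper: equation \eqref{ini} read componentwise for the multigrading gives $\ini(\Rees)=\Rees_{\ini}$, normality of both algebras comes from the integral closedness of the ideals $I_S$ and $J_S$ in Theorem \ref{primary}, Cohen--Macaulayness of $\Rees_{\ini}$ is Hochster's theorem for normal monoid domains, and the transfer to $\Rees$ is the Sagbi/Gr\"obner deformation of Conca--Herzog--Valla \cite{CHV}. The only difference is cosmetic: you rederive the integral closedness of $J_S$ from \eqref{ini-inter} and Theorem \ref{in-powwer-primary}, whereas the paper records it directly as part of Theorem \ref{primary}.
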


\begin{proof}
The equation $\ini(\Rees)=\Rees_{\ini}$ is just equation \eqref{ini} read simultaneously for all NE-patterns $S$.

The normality of $\Rees$ and $\Rees_{\ini}$ follows from the fact that all the ideals $I_S$ and $J_S$ are integrally closed by Theorem \ref{primary}. We observe that $\Rees_{\ini}$ is a normal monoid domain, and therefore Cohen-Macaulay by Hochster's theorem.  Finally   we use the transfer of  the Cohen-Macaulay property from $\ini(\Rees)=\Rees_{\ini}$ to $\Rees$, see \cite{CHV}. 	
\end{proof}
 
In order to gain insight into the minimal free resolutions of the ideals $I_S$ over $R$ we must understand $\Rees$ as the residue class ring of a polynomial ring over $K$. To this end we introduce a variable $z_{a\delta}$ for every bound $a$ and every $t$-minor $\delta\in I_t(a)$. Let
$$
\cS=R[z_{a\delta}: |\delta|=t,\ t+a\le n+1,\ \delta \in I_t(a)].
$$
Viewed as a $K$-algebra, $\cS$ needs also the indeterminates $x_{ij}$, $1\le i\le m$, $1\le j\le n$.  We want to study the surjective $R$-algebra homomorphism
$$
\Phi:\cS\to\Rees, \qquad \Phi(z_{a\delta})=\delta T_{a|\delta|},\quad \Phi|R=\operatorname{id}.
$$
We introduce an auxiliary  monomial order  on $\cS$ by first ordering the indeterminates. The $x_{ij}$ are ordered as in $R$. Next we set
$$
x_{ij}>z_{a\delta}
$$
for all $i,j,a,\delta$, and 
$$
z_{a\delta} > z_{b\sigma} \quad\iff\qquad a <b \quad\text{or } a=b \text { and } \ini(\delta)>_\lex \ini(\sigma) .
$$
This order of the indeterminates is extended to the \emph{reverse} lexicographic order $\le_\revlex$ it induces on the monomials in $\cS$.

Now we define the main monomial order on $\cS$ as follows. For monomials $Z_1$ and $Z_2$ in $x_{ij}$ and $z_{a\delta}$ we set
\begin{multline*}
Z_1 \prec Z_2\quad\iff\quad \ini(\Phi(Z_1))<_\lex \ini(\Phi(Z_2)) \quad\text{or }\\ \ini(\Phi(Z_1))= \ini(\Phi(Z_2)) \text { and } Z_1<_\revlex Z_2.\end{multline*}
In other words, we pull the monomial order on $\Rees$ back to $\cS$ and then use our auxiliary order to separate monomials with the same image under $\Phi$.

\begin{theorem}\label{Rees-2}
\leavevmode
\begin{enumerate}
\item With respect to the monomial order $\prec$ the ideal $\cI=\Ker\Phi$ has a  Gr\"obner  basis of quadrics, given by the equations \eqref{deg2_1} and \eqref{deg2_2} (interpreted as elements in $\cI$).
\item In particular $\Rees$ is a Koszul algebra.
\item All the ideals $I_S$ have linear minimal free resolutions over $R$.
\end{enumerate}

\end{theorem}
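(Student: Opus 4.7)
The plan is to prove the three statements in the order given, using the NE-straightening law of Theorem \ref{NEstr} as the pivotal input. For (1), let $\cJ\subseteq \cI$ denote the ideal of $\cS$ generated by the lifts of the quadrics in \eqref{deg2_1} and \eqref{deg2_2}; I want to show that $\cJ=\cI$ and simultaneously that $\ini_\prec(\cI)$ is generated by the leading monomials of these quadrics. First I would identify the leading monomial of each quadric under $\prec$. Every monomial in $\cS$ has the shape $Z=M\prod_{i=1}^{w} z_{a_i\delta_i}$, and $Z$ corresponds naturally to a NE-tableau $M\delta_1\cdots\delta_w$ of some pattern $S$. Both sides of each quadric \eqref{deg2_1} and \eqref{deg2_2} have the same image under $\Phi$ up to the leading diagonal, so the $<_\lex$ part of $\prec$ does not separate them; the reverse lexicographic tie-breaker on $\cS$, arranged so that $z_{a\delta_0}$ corresponding to the lex-smallest $\ini(\delta_0)$ is ``heavier'', is exactly what forces the non-canonical monomial $z_{a\delta}z_{b\sigma}$ (respectively $x_{uv}z_{a\delta}$) to be the leading term. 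Verifying this tie-break behavior carefully for both families of quadrics is the step I expect to be the most technical.

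Granted that, a monomial $Z\in\cS$ is \emph{standard} modulo $\cJ$ exactly when no lift of any \eqref{deg2_1}- or \eqref{deg2_2}-leading monomial divides it, which by Lemma \ref{lem4} is equivalent to the associated NE-tableau $M\delta_1\cdots\delta_w$ being $S$-canonical. Theorem \ref{NEstr} tells us that $S$-canonical NE-tableaux form a $K$-basis of $I_S$, and as $S$ varies, the family of all such tableaux forms a $K$-basis of $\Rees=\bigoplus_S I_S T^S$. Therefore $\Phi$ sends the standard monomials of $\cJ$ bijectively onto a $K$-basis of $\Rees$, and so they are $K$-linearly independent modulo $\cI$. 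Since $\cJ\subseteq\cI$ and the standard monomials of $\cJ$ already span $\cS/\cI$, we must have $\cJ=\cI$, and the quadrics form a Gr\"obner basis of $\cI$ with respect to $\prec$.

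Part (2) is then immediate from Fr\"obeg's theorem: a standard graded $K$-algebra defined by an ideal with a quadratic Gr\"obner basis is Koszul, so $\Rees$ is a Koszul $K$-algebra. (Here one uses that the extra indeterminates $T_{ta}$ and $x_{ij}$ make $\Rees$ a quotient of $\cS$ which is standard graded in the appropriate sense; the multigrading from the $T_{ta}$-degrees is compatible.)

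Part (3) follows from (2) by invoking Blum's theorem \cite{B}: if the (multi-)Rees algebra of a family of ideals in a polynomial ring is Koszul, then each ideal in the family--hence every $I_S$, which appears as the $T^S$-graded component of $\Rees$--has a linear minimal free resolution over $R$. Since $\Rees$ is Koszul by (2), all the $I_S$ have linear resolutions, completing the theorem.
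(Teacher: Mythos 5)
Your proof is correct and follows essentially the same route as the paper's: identify the leading monomials of the \eqref{deg2_1}--\eqref{deg2_2} quadrics via the reverse lexicographic tie--break, use Lemma \ref{lem4} to identify standard (irreducible) monomials with $S$-canonical NE-tableaux, invoke Theorem \ref{NEstr} to see these form a $K$-basis of $\Rees$, and then conclude via a quadratic Gr\"obner basis $\Rightarrow$ Koszul and Blum's theorem. The only cosmetic difference is that you take $\cJ$ to be the polynomial ideal generated by the quadrics while the paper works with the monomial ideal generated by the non-canonical monomials, but the dimension-count argument is the same.
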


\begin{proof}
Equation \eqref{deg2_1} defines the polynomial
$$
z_{a\delta}z_{b\sigma}-\bigl( z_{a\delta_0}z_{b\sigma_0}+\lambda_1z_{a\delta_1}z_{b\sigma_1} +\dots+\lambda_wz_{a\delta_w}z_{b\sigma_w} \bigr)
$$
in $\cI$. By the definition of $\prec$ we first observe that only $z_{a\delta}z_{b\sigma}$ or $z_{a\delta_0}z_{b\sigma_0}$ can be the initial monomial. But then the auxiliary reverse lexicographic oder makes $z_{a\delta}z_{b\sigma}$ the leading monomial. Similarly one sees that $x_{uv}z_{a\delta}$ is the leading monomial of the polynomial in $\cI$ defined by \eqref{deg2_2}. 

Let $\cJ$ be the ideal generated by all monomials $Z=Mz_{a_1\delta_1}\cdots z_{a_w\delta_w}$, $M\in R$, for which $M\delta_1\cdots\delta_w$ is not canonical of pattern $S=((|\delta_1|,a_1),\dots,(|\delta_1|,a_1))$. It follows from Lemma \ref{lem4} that $MMz_{a_1\delta_1}\cdots z_{a_w\delta_w}$ then contains a factor $z_{a_i\delta_i}z_{j\delta_j}$ or a factor $x_{uv}z_{a_j\delta_j}$ that is not mapped to a canonical tableau of the associated pattern. In connection wit the argument above, this observation implies that $\cJ\subset\ini_\prec(\cI)$.

On the other hand the set of monomials that do not belong to $\cJ$ form a $K$-basis of $\cS/\cI=\Rees$ by Theorem \ref{NEstr}. This is only possible if $\cJ=\ini(\cI)$.

Since $\cI$ has a  Gr\"obner  basis of quadratic polynomials, $\Rees=\cS/\cI$ is a Koszul algebra. By (the multigraded version of) a  theorem of Blum \cite{B} the linearity of the resolutions  follows from the Koszul property of the multi-Rees algebra.	
\end{proof}

In addition to $\Phi$, we have a surjective $R$-algebra homomorphism
$$
\Psi:\cS\to\Rees_{\ini}, \qquad \Psi(z_{a,\delta})=\ini(\delta) T_{a|\delta|},\quad \Phi|R=\operatorname{id}.
$$

\begin{theorem}\label{Rees-3}\leavevmode
\begin{enumerate}
\item With respect to the monomial order $\prec$ the ideal $\cB=\Ker\Psi$ has a  Gr\"obner  basis of quadrics, given by the binomials resulting from the equations $\ini(\delta\sigma)=\ini(\delta_0\sigma_0)$ in \eqref{deg2_1} and $\ini(x_{uv}\delta_)=\ini(x_{u_0v_0}\delta_0) in $\eqref{deg2_2} (interpreted as elements in $\cI$).
\item In particular $\Rees_{\ini}$ is a Koszul algebra.
\item All the ideals $J_S$ have linear minimal free resolutions over $R$.
\end{enumerate}
\end{theorem}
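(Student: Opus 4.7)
The plan is to mirror the proof of Theorem \ref{Rees-2} step by step, replacing the full polynomial identities in $\cI$ with only their leading-monomial parts and the map $\Phi$ with $\Psi$. First, I would verify that the prescribed binomials belong to $\cB$. Since $\Psi$ restricts to the identity on $R$ and sends $z_{a\delta}$ to $\ini(\delta)T_{a|\delta|}$, the identities $\ini(\delta\sigma)=\ini(\delta_0\sigma_0)$ and $\ini(x_{uv}\delta)=\ini(x_{u_0v_0}\delta_0)$ recorded in \eqref{deg2_1} and \eqref{deg2_2} translate at once into
\[
z_{a\delta}z_{b\sigma}-z_{a\delta_0}z_{b\sigma_0}\in\cB, \qquad x_{uv}z_{a\delta}-x_{u_0v_0}z_{a\delta_0}\in\cB.
\]
Note moreover that $\delta\mapsto\ini(\delta)=\la b_1\dots b_t\ra$ is injective on minors, since the row indices are fixed to $1,\dots,t$, so $\Psi$ does not collapse distinct $z$-variables.

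Next I identify the leading terms with respect to $\prec$. For each of the two kinds of binomials above, the two monomials share the same $\ini\bigl(\Phi(\cdot)\bigr)$ --- that coincidence is exactly the identity displayed in \eqref{deg2_1}/\eqref{deg2_2}. Hence $\prec$ defers to the auxiliary reverse lexicographic order, and the computation is literally the one performed in Theorem \ref{Rees-2}(1): the leading monomials are $z_{a\delta}z_{b\sigma}$ and $x_{uv}z_{a\delta}$. These are precisely the generators of the monomial ideal $\cJ$ introduced in that proof, so $\cJ\subset\ini_\prec(\cB)$.

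For the converse inclusion I would show that the monomials lying outside $\cJ$ span $\cS/\cB=\Rees_{\ini}$ as a $K$-vector space, forcing $\cJ=\ini_\prec(\cB)$ and establishing the Gr\"obner basis claim. By Lemma \ref{lem4}, such monomials are in bijection with the $S$-canonical NE-tableaux $M\Delta$, $\Delta=\delta_1\cdots\delta_w$, ranging over all patterns $S$, and their $\Psi$-images are the monomials $M\ini(\delta_1)\cdots\ini(\delta_w)\,T^S$ inside $J_ST^S$. By the uniqueness statement preceding Theorem \ref{NEstr} (each monomial of $J_S$ determines its $S$-canonical factorization uniquely via the recursive extraction of lex-smallest diagonals from the proof of Theorem \ref{primary}), these images form the monomial $K$-basis of $\Rees_{\ini}=\bigoplus_S J_ST^S$, which proves (1).

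Finally, (2) is immediate from (1) since a quadratic Gr\"obner basis implies that $\Rees_{\ini}=\cS/\cB$ is Koszul, and (3) follows by applying the multigraded version of Blum's theorem \cite{B} to the Koszul multi-Rees algebra $\Rees_{\ini}$, exactly as in the proof of Theorem \ref{Rees-2}(3). The only point that demands care --- and therefore the main obstacle --- is the bijection used above: one must match the uniqueness of the NE-factorization of a monomial of $J_S$ with the complement of $\cJ$ in the standard-monomial basis of $\cS$. Once that matching is in hand, the remainder of the proof runs in lockstep with Theorem \ref{Rees-2}.
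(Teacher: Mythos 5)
Your proposal is correct and follows exactly the route the paper intends: the paper itself merely states that the proof of Theorem \ref{Rees-3} is ``completely analogous'' to that of Theorem \ref{Rees-2}, and your writeup supplies that analogy accurately, replacing $\Phi$ by $\Psi$, replacing the full straightening relations by their leading binomials, and reusing Lemma \ref{lem4} and the uniqueness of $S$-canonical factorizations to identify $\ini_\prec(\cB)$ with $\cJ$. No gaps; this matches the paper's argument in spirit and in detail.
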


\begin{proof}
The first statement is proved completely analogous the first statement in Theorem \ref{Rees-2}, and the second and third follow from it in the same way as for Theorem \ref{Rees-2}.
\end{proof}

\section{Some Gorenstein Rees rings and some factorial fiber rings}

In this section we will consider multi-Rees algebras defined by some of the ideals $I_t(a)$. More generally, if $I_1,\dots,I_p$ are ideals in $R$, we let
$$
R(I_1,\dots,I_p)=R[I_iT_i:i=1,\dots,p]\subset R[T_1,\dots,T_p]
$$
denote the multi-Rees algebra defined by $I_1,\dots,I_t$. Note that we could as well have defined it by taking ordinary Rees algebras successively, since
$$
R(I_1,\dots,I_p)=B\bigl(I_pB\bigr) \quad \mbox{ where } \quad  B=R(I_1,\dots,I_{p-1}).
$$ 

Some of the Rees rings defined by NE-ideals of minors are Gorenstein. This is not true in general for the ``total'' multi-Rees rings of the last section: the first potential non-Gorenstein example is a $2\times 3$-matrix, and the corresponding total multi-Rees ring is indeed not Gorenstein. Nevertheless, the multi-Rees rings defined by certain selections of the ideals $I_t(a)$ are Gorenstein, as we will see in the following. 

The ideals $I_t(a)$ form a poset under inclusion. The minimal elements are the principal ideals $I_t(n-t+1)$ and the maximal element is $I_1(1)$, the ideal generated by the indeterminates in the first row of our matrix $X$. The next theorem states the Gorenstein property of the multi-Rees algebras defined by an unrefinable ascending chain in our poset that starts from a minimal element or a cover of a minimal element.

\begin{theorem}\label{Rees-Gor} Let $I_{t_1}(a_1)\subset I_{t_2}(a_2) \subset \dots \subset  I_{t_p}(a_p)$ such that $\height I_{t_1}(a_1)=1$ or $2$ and  $\height I_{t_i}(a_i)=1+\height I_{t_{i-1}}(a_{i-1})$ for $i=2,\dots,p$. Equivalently, 
\begin{enumerate}
\item $a_1=n-t_1$ or $a_1=n-t+1$;
\item $t_i=t_{i-1}$ and $a_i=a_{i-1}-1$ or $t_i=t_{i-1}-1$ and $a_i=a_{i-1}$ for $i=2,\dots,p$. 
\end{enumerate}
Then the multi-Rees algebra $R\bigl(I_{t_1}(a_1),\dots,I_{t_p}(a_p)\bigr)$ is Gorenstein and normal  with divisor class group $\ZZ^{p-1}$ or $\ZZ^p$, depending on whether $a_1=n-t_1$ or $a_1=n-t+1$. 
\end{theorem}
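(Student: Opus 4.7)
The plan is to reduce to the initial (toric) subalgebra and then invoke the Danilov--Stanley criterion for Gorensteinness of normal affine monoid $K$-algebras.

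First I would verify that $\Rees'=R\bigl(I_{t_1}(a_1),\ldots,I_{t_p}(a_p)\bigr)$ is a normal Cohen--Macaulay domain and that its initial subalgebra coincides with $\Rees'_{\ini}:=R\bigl(J_{t_1}(a_1),\ldots,J_{t_p}(a_p)\bigr)$. The arguments in Theorem \ref{Rees-1} restrict verbatim to this sub-family since every product $\prod I_{t_i}(a_i)^{k_i}$ (respectively $\prod J_{t_i}(a_i)^{k_i}$) is integrally closed by Theorem \ref{primary}. Because the Gorenstein property transfers from a Cohen--Macaulay initial subalgebra to the original algebra (they have equal Hilbert series, cf.\ \cite{CHV}), it suffices to prove the statement for the monomial algebra $\Rees'_{\ini}$.

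Next I would realise $\Rees'_{\ini}$ as a normal affine monoid ring $K[H]$, with $H$ the affine monoid generated by the $x_{ij}$ together with the monomials $\ini(\delta)T_i$ for $\delta$ a $t_i$-minor of $X_{t_i}(a_i)$. Theorem \ref{in-powwer-primary} gives the primary decomposition of each $J_{t_i}(a_i)$, which in turn identifies the facets of the rational cone $C=\RR_{\geq 0}H$: one facet for each bounding coordinate hyperplane $x_{ij}=0$, one for each associated prime of some $J_{t_i}(a_i)$, and one for each Rees direction $T_i$. The hypothesis $\height I_{t_i}(a_i)=i-1+\height I_{t_1}(a_1)$ forces every step down the chain to introduce exactly one new associated prime, so the facet structure is particularly transparent and compatible with the chain.

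With the facet data in hand I would invoke Danilov--Stanley: $K[H]$ is Gorenstein precisely when $\operatorname{int}(C)\cap H^{\mathrm{gp}}=h_{0}+H$ for some $h_{0}\in H$. I would exhibit $h_{0}$ as a product $\prod_{ij}x_{ij}\cdot\prod_{i}T_i^{c_i}$ with exponents $c_i$ chosen so that $h_{0}$ saturates each facet inequality exactly once; the unrefinable-chain condition ensures that such $c_i$ exist uniquely and that $h_{0}$ actually lies in $H$. The class group is then read off the standard exact sequence
$$
0\to K^{\ast}\to (\Rees'_{\ini})^{\ast}\to \bigoplus_{F}\ZZ\cdot F \to \Cl(\Rees'_{\ini})\to 0,
$$
with $F$ ranging over facets of $C$; counting facets modulo principal divisors yields $\ZZ^{p-1}$ when $a_1=n-t_1+1$ (the starting ideal is principal, so one facet is absorbed by the relation) and $\ZZ^{p}$ when $a_1=n-t_1$.

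The main obstacle will be the explicit determination of the interior-generator $h_{0}$: the Danilov--Stanley criterion is a clean reformulation, but its verification amounts to showing that a specific linear system of facet inequalities admits a canonical $H$-solution, and this is exactly where the unrefinable-chain hypothesis (each new $I_{t_i}(a_i)$ is an \emph{immediate} cover of the previous one, with starting height $1$ or $2$) is used most heavily. Once $h_{0}$ is in hand, normality and Cohen--Macaulayness follow from Hochster's theorem and the initial-subalgebra transfer, and the class-group calculation reduces to bookkeeping with the facet count.
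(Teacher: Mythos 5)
Your approach is the alternative toric proof that the authors themselves sketch in Remark (a) immediately after Theorem \ref{Rees-Gor}, and it is genuinely different from the paper's actual proof. The paper proceeds by induction on $p$: setting $\Rees'=R\bigl(I_{t_1}(a_1),\dots,I_{t_{p-1}}(a_{p-1})\bigr)$ and $Q=I_{t_p}(a_p)\Rees'$, it invokes a theorem of Herzog and Vasconcelos \cite{HV} giving $\cl(\omega_\Rees)=\cl(\omega_{\Rees'})+(\hht Q-2)\cl(Q\Rees)$ inside $\Cl(\Rees)=\Cl(\Rees')\oplus\ZZ$. The whole weight of that argument sits on Lemma \ref{prime}, stating that $Q$ is a \emph{prime} ideal of height exactly $2$ which at the localization $\Rees'_Q$ needs only $2$ generators; primeness is established via the intersection formula $P\Rees'=P\cS\cap\Rees'$ against an auxiliary Segre-type multi-Rees algebra, and the local $2$-generation by inverting $x_{1n}$. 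That divisor-theoretic route sidesteps the facet analysis of the cone entirely, which is where your proposal would have to spend all its effort.

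As written, your proposal is a program rather than a proof. The reduction to $\Rees'_{\ini}$ is fine and does follow from Theorem \ref{Rees-1} together with the Stanley/\cite{CHV} transfer principle, and Danilov--Stanley is the correct tool. But the two steps that would actually do the work are left as promissory notes. First, you do not derive the facet description: your informal tally (``one facet per coordinate hyperplane, one per associated prime of some $J_{t_i}(a_i)$, one per $T_i$'') is unjustified and almost certainly overcounts, since the associated primes of the $J_{t_i}(a_i)$ overlap along the chain and the class-group computation needs the exact, multiplicity-free facet list, not a rough enumeration. Second, you do not commit to, let alone verify, the key structural fact the authors record in the remark — that the relevant inequalities obtained from Theorem \ref{in-powwer-primary} have coefficients in $\{0,\pm1\}$ with $+1$ appearing exactly once more than $-1$, so that the all-ones exponent vector evaluates to $1$ on every facet. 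The sentence ``the unrefinable-chain condition ensures that such $c_i$ exist uniquely and that $h_0$ actually lies in $H$'' is precisely the claim that requires proof, and it genuinely depends on that hypothesis: Remark (c) points out that Gorensteinness fails already for $F(I_1(1),I_1(2))$, so this is not an automatic consequence of normality. Until that coefficient structure is established, your $h_0$ is a guess and the class-group bookkeeping is unsupported. The skeleton is sound, but the load-bearing bones are missing.
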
 

\begin{proof}
Note that the smallest ideal is a principal ideal if $a_1=n-t_1+1$. In this case $R\bigl(I_{t_1}(a_1),\dots,I_{t_p}(a_{t_p})\bigr)$ is just (isomorphic to) a polynomial ring over $R\bigl(I_{t_2}(a_2),\dots,I_{t_p}(a_p)\bigr)$, and $a_2=n-t_2$. Since polynomial extensions do not affect the Gorenstein property, we can assume that $a_1=n-t_1$.

Let $\Rees=R\bigl(I_{t_1}(a_1),\dots,\allowbreak I_{t_p}(a_p)\bigr)$, $\Rees'=R\bigl(I_{t_1}(a_1),\dots,I_{t_{p-1}}(a_{p-1})\bigr)$, and $Q=I_{t_p}(a_p)\Rees'$. Then $\Rees$ is just the ordinary Rees algebra of the ideal $Q$ of $\Rees'$, and by induction on $p$ it is enough to understand the extension of $\Rees'$ to $\Rees$.

By the next lemma, $Q$ is a prime ideal of height $2$ such that $Q\Rees'_Q$ is generated by $2$ elements. Moreover, $\Rees'$ and $\Rees$ are normal domains since they are retracts of the total multi-Rees algebra of the last section (or by Theorem \ref{primary}). Under these conditions a theorem of Herzog and Vasconcelos \cite[Theorem(c), p. 183]{HV} shows that the canonical module of $\Rees$ has the same divisor class as the canonical module of $\Rees'$ (extended to $\Rees$):
$$
\cl(\omega_{\Rees})=\cl(\omega_{\Rees'})+(\hht Q-2)\cl(Q\Rees)=\cl(\omega_{\Rees'})\in \Cl(\Rees)=\Cl(\Rees')\dirsum \ZZ.
$$
By induction $\Rees'$ is Gorenstein, $\cl(\omega_{\Rees'})=0$ . Therefore $\Rees$ is Gorenstein as well, and we are done.
The assertion on the divisor class group follows as well. \end{proof} 

\begin{lemma}\label{prime}
With the notation of the preceding proof, $Q$ is a prime ideal of height $2$ in $\Rees'$ such that $Q\Rees'_Q$ is generated by $2$ elements.  
\end{lemma}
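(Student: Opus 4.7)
The plan is to establish the three claims of the lemma --- primality of $Q$, the height equality $\height Q=2$, and $\mu(Q\Rees'_Q)\le 2$ --- in sequence, by combining (a) the quadratic Gr\"obner presentation of $\Rees'$ (obtained by restricting Theorem \ref{Rees-2} to the variables $z_{a_i\delta}$ with $i\le p-1$), (b) the Eagon--Northcott linear syzygies of the minors generating each $I_{t_i}(a_i)$ and $J$, and (c) the crucial observation that every generator $\delta T_i$ (with $\delta$ a $t_i$-minor of $I_{t_i}(a_i)$, $i\le p-1$) becomes a unit in $\Rees'_Q$.

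For primality of $Q$, I examine the multi-graded fibre ring $\Rees'/Q=\Rees'\otimes_R R/J$. Its degree-zero part is $R/J$, a domain since $J$ is prime. Each generator $\delta T_i$ survives nontrivially in $\Rees'/Q$: since $\delta$ has degree $t_i$ while $JI_{t_i}(a_i)$ is generated in degree $t_p+t_i>t_i$, we have $\delta\notin JI_{t_i}(a_i)$ and so $\delta T_i\notin Q$. Reducing the quadratic relations \eqref{deg2_1} and \eqref{deg2_2} of $\Rees'$ modulo $J$ yields a presentation of $\Rees'/Q$ as an $R/J$-algebra by quadratic relations; a direct inspection of these surviving relations shows that they cut out an irreducible variety (two disjoint sets of variables meet only in bilinear forms), hence $\Rees'/Q$ is a domain.

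For the height, since $\Rees'$ is Cohen--Macaulay (Theorem \ref{Rees-1}) and an affine domain, we have $\height Q=\dim\Rees'-\dim\Rees'/Q$. Combining $\dim\Rees'=\dim R+p-1$ with $\dim\Rees'/Q=\dim R+p-3$ (obtained from a direct count of the generators and relations of $\Rees'/Q$ in the presentation above), we conclude $\height Q=2$.

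For the 2-generator claim, the degree argument above shows that $\delta T_i$ is a unit in $\Rees'_Q$ for every $t_i$-minor $\delta$ and $i\le p-1$. Hence $T_i^{-1}:=\delta(\delta T_i)^{-1}$ is a well-defined element of $Q\Rees'_Q$, and every $\mu\in I_{t_i}(a_i)$ satisfies $\mu=(\mu T_i)T_i^{-1}$ with $\mu T_i$ a unit, making $I_{t_i}(a_i)\Rees'_Q=(T_i^{-1})\Rees'_Q$ a principal ideal. In particular, the portion of $Q\Rees'_Q$ coming from $I_{t_{p-1}}(a_{p-1})\subset J$ is generated by the single element $T_{p-1}^{-1}$. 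The remaining generators of $Q\Rees'_Q$, corresponding to $t_p$-minors of $J$ that do not already lie in $I_{t_{p-1}}(a_{p-1})$, are then reduced using the Eagon--Northcott linear syzygies $\sum_j(-1)^j x_{r,c_j}[c_0,\dots,\hat c_j,\dots,c_{t_p}]=0$ of the minors of $X_{t_p}(a_p)$; their coefficients $x_{r,c_j}$ are units in $\Rees'_Q$ (as $J$ has no linear elements when $t_p\ge 2$, while for $t_p=1$ the $x_{r,c_j}$ are themselves among the generators of $J$). A case analysis, treating the column step ($t_p=t_{p-1}$, $a_p=a_{p-1}-1$) and the row step ($t_p=t_{p-1}-1$, $a_p=a_{p-1}$) of the final link in the chain separately via the appropriate Laplace expansion, produces a single second generator that, together with $T_{p-1}^{-1}$, generates $Q\Rees'_Q$. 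The main obstacle will be this last step: identifying the explicit second generator and verifying that the combination of Eagon--Northcott syzygies and the principality of $I_{t_{p-1}}(a_{p-1})\Rees'_Q$ indeed suffices to reduce to exactly two generators.
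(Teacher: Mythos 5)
Your proposal has genuine gaps in all three parts of the lemma, and you acknowledge the third yourself.

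\textbf{Primality.} Saying that the quadrics surviving modulo $J$ (your $J$ being $I_{t_p}(a_p)$) ``cut out an irreducible variety (two disjoint sets of variables meet only in bilinear forms)'' is not an argument. Bilinearity between two groups of variables does not force irreducibility: $K[x,y,u,v]/(xu,yv)$ is bilinear in this sense and is not a domain. The paper's route is quite different and relies on an ingredient you never invoke, namely the primary decomposition formula \eqref{inter}. One takes $\cS=R(P,\dots,P)$ with $p-1$ copies of $P=I_{t_p}(a_p)\supset\Rees'$ and uses \eqref{inter} to show $P\Rees'=P\cS\cap\Rees'$, so that $\Rees'/Q$ embeds into $\cS/P\cS$; the latter is the Segre product of a polynomial ring with $\gr_P(R)=R[PT]/PR[PT]$, which is a domain by the classical fact \cite[(9.17)]{BV}. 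Without that intersection step (or some substitute) your reduction does not get off the ground.

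\textbf{Height.} The identity $\hht Q=\dim\Rees'-\dim\Rees'/Q$ is fine, but $\dim\Rees'/Q=\dim R+p-3$ ``obtained from a direct count of the generators and relations'' is not a proof: counting generators and relations of a quadratic presentation does not compute Krull dimension. The inequality $\hht Q\ge2$ is easy (the prime $I_{t_{p-1}}(a_{p-1})\Rees'\ne 0$ is properly contained in $Q$), and $\hht Q\le2$ will follow from the two-generation claim via Krull's height theorem, so the dimension computation is unnecessary once (3) is settled.

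\textbf{Two generators.} Your observation that $\delta T_i\notin Q$ for degree reasons and the resulting principality $I_{t_i}(a_i)\Rees'_Q=(T_i^{-1})$ is correct and is the right conceptual step. But from there on you leave the argument open, and the gap is real: for $t_p\ge2$ the Eagon--Northcott syzygies among the $t_p$-minors alone do not reduce to a single extra generator (they cut the generating set down to roughly $n-a_p$ elements, not to one), and combining them with the Laplace relation through the $(t_p+1)$st row (the one that produces a $T_{p-1}^{-1}$ term) requires auxiliary units such as $2$-minors of rows $1$ and $t_p+1$, an ingredient you do not touch. The missing idea is the localization at $x_{1n}$: since $x_{1n}\notin Q$ when $t_p>1$, inverting it reduces to a smaller matrix with all $t_i$ and $n$ decreased by one, hence by induction to $t_p=1$; in that case one only has the two easy subcases $t_{p-1}=1$ and $t_{p-1}=2$, each settled by a single linear relation of type \eqref{deg2_2}. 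You will need this reduction (or an explicit replacement) to close the argument.
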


\begin{proof}
The most difficult claim is the primeness of $Q$. We show primeness of a larger class of ideals, namely all ideals $I_u(b)\Rees'$ such that $I_u(b)\supset I_{t_{p-1}}(a_{p-1})$. Set $P=I_u(b)$. 

As an auxiliary ring we consider the multi-Rees algebra $\cS=R(P,\dots,P)$ with $p-1$ ``factors'' $P$. For $\Rees'$ as above one has $\cS\supset \Rees'$ since $P$ contains all the ideals defining $\Rees'$. It follows from Equation (\ref{inter}) that 
$$P\Rees'=P\cS\cap \Rees'.$$ In fact, both algebras use the variables $T_1,\dots,T_{p-1}$. The coefficient ideal of $T_1^{e_1}\cdots T_{p-1}^{e_{p-1}}$ in $ P\cS$ is $P^{1+e_1+\dots+e_{p-1}}$ and its coefficient ideal  in $\Rees'$ is 
$$
I_{t_1}(a_1)^{e_1}\cdots I_{t_{p-1}}(a_{p-1})^{e_{p-1}}
$$
whereas the coefficient ideal in $P\Rees'$ is $PI_{t_1}(a_1)^{e_1}\cdots I_{t_{p-1}}(a_{p-1})^{e_{p-1}}$. Equation \eqref{inter} implies
$$
PI_{t_1}(a_1)^{e_1}\cdots I_{t_{p-1}}(a_{p-1})^{e_{p-1}}=P^{1+e_1+\dots+e_{p-1}}\cap I_{t_1}(a_1)^{e_1}\cdots I_{t_{p-1}}(a_{p-1})^{e_{p-1}},
$$
and this is the desired equality.

The primeness of $P\Rees'$ follows if $P\cS$ is a prime ideal. The algebra $\cS$ is the Segre product of the polynomial ring in $p-1$ variables over $K$ and the ordinary Rees algebra $S=R[PT]$. Consequently $\Rees'/P\Rees'$ is the Segre product of the same polynomial ring and the associated graded ring $S/PS$ of $P$. But the latter is an integral domain \cite[(9.17)]{BV}.

The smallest  choice for $P$ is $I_{t_{p-1}}(a_{p-1})$. This nonzero prime ideal is properly contained in $Q$. Therefore $\hht Q\ge 2$. In order to finish the proof it remains to show that $Q\Rees'_Q$ is generated by $2$ elements. The indeterminate $x_{1n}$ in the right upper corner of $X$ is not contained in $Q$ if $t_p>1$. We can invert it and, roughly speaking, reduce all minor sizes and $n$  by $1$. This is a standard localization argument; see \cite[(2.4)]{BV} (where it is given for $x_{11}$). Therefore we can assume that $t_p=1$.

If even $p=1$, then $a_1=n-1$, and $P$ is evidently generated by $2$ elements. So suppose that $p>1$. There are two cases left, namely $t_{p-1}=1$, $a_{p-1}=a_p-1$ or $t_{p-1}=2$, $a_{p-1}=a_p$.

In the first case we use the equations
$$
x_{1i}(x_{1n}T_{p-1})=x_{1n}(x_{1i}T_{p-1}), \qquad i\ge a_{p-1}=a_p+1.
$$
The element $x_{1n}T_{p-1}$ does not belong to $Q$, and becomes a unit in $\Rees'_Q$. Thus $Q\Rees'_Q$ is generated $x_{1a_p}$ and $x_1n$.

In the other case one uses the linear syzygies of the $2$-minors in $I_2(a_{p-1})$ with coefficients from the first row of $X$ in order to show that $Q\Rees'_Q$ is generated by $x_{1n-1}$ and $x_{1n}$.
\end{proof}

\begin{remark}
(a) An alternative proof of Theorem \ref{Rees-Gor} can be given by toric methods. Using Theorem \ref{in-powwer-primary} one can describe the cone of the exponent vectors of $\ini(\Rees)$ ($\Rees$ as in Theorem \ref{Rees-Gor}) by inequalities. These inequalities have coefficients in $\{0,\pm 1\}$, and $1$ occurs exactly one more time than $-1$. Therefore the exponent vector with all entries $1$ generates the interior of the cone of exponent vectors, which is the set of exponent vectors of the canonical module of $\ini(\Rees)$ by theorem of Danilov and Stanley \cite[6.3.5]{BH}. It follows that $\ini(\Rees)$ is Gorenstein and therefore $\Rees$ is also Gorenstein. 

The opposite implication also works for the Gorenstein property since $\ini(\Rees)$ is known to be Cohen-Macaulay. For Cohen-Macaulay domains the Gorenstein property only depends on the Hilbert series by a theorem of Stanley \cite[4.4.6]{BH}.

(b) In general, extensions of the prime ideals $I_t(a)$ to Rees algebras defined by collections of the ideals $I_u(b)$ are not prime. However, by extending the intersection argument in the proof of Lemma \ref{prime} one can show that they are radical ideals.
\end{remark}

A Cohen-Macaulay factorial domain is Gorenstein. So one my wonder whether the Rees rings discussed above can be factorial. But, apart from trivial exceptions, Rees rings cannot be factorial.  On the other hand, the fiber rings have more chances to be factorial. 
 The fiber ring $F(I_1,\dots, I_p)$ of associated to the multi-Rees ring of  ideals $I_i$, $i=1,\dots,p$ is defined as 
$$
F(I_1,\dots, I_p)=R(I_1,\dots,I_p)/\mm R(I_1,\dots,I_p)
$$
where $\mm$ is the irrelevant maximal ideal of $R$. If each of the ideals $I_i$ is generated by elements of the same degree, say  $d_i$, the multi-fiber ring is a retract of the Rees ring, namely  
$$
F(I_1,\dots, I_p)=K[(I_i)_{d_i}T_i: i=1,\dots,p]
$$
where $(I_i)_{d_i}$ is the homogeneous component of degree $d_i$. It can of course be replaced by a system of degree $d_i$ generators of $I_i$.

Let us consider a sequence $(t_1,a_1),\dots,(t_p,a_p)$ such that $t_1<\cdots <t_p$ and $I_i=I_{t_i}(a_i)$. In this case  the multi-fiber ring can even be identified with the subalgebra
\begin{equation}
K[(I_i)_{t_i}: i=1,\dots,p]\label{fiber-emb}
\end{equation}
of $R$ (it is only essential that the degrees $t_i$ are pairwise different). Thus the multi-fiber ring is a subalgebra of the homogeneous coordinate ring of the flag variety of $K^n$. The latter is the subalgebra of $K[X]$ (where $X$ is an $n\times n$-matrix) generated by the $t$-minors of the first $t$ rows, $t=1,\dots,n$. The coordinate ring of the flag variety is factorial. See \cite[p.~138]{Ful} for an invariant-theoretic argument; an alternative proof is given below.

\begin{theorem}
Let $t_1<\cdots < t_p$ and $a_1\ge \dots\ge a_p$ and $I_i=I_{t_i}(a_i)$ for $i=1,\dots,p$. Then the multi-fiber ring $F(I_1,\dots, I_p)$  is factorial and therefore Gorenstein.
\end{theorem}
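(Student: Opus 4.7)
The plan is as follows. First, I would observe that $F=F(I_1,\dots,I_p)$ is a normal Cohen--Macaulay domain: the sub-multi-Rees algebra $R(I_1,\dots,I_p)$ inherits normality and the Cohen--Macaulay property from the total multi-Rees algebra of Theorem \ref{Rees-1} (the argument localizes to any sub-collection of the $I_t(a)$), and $F$ arises from it as the fiber at $\mm$, a quotient which preserves both properties through the initial-algebra description. Because a normal Cohen--Macaulay factorial domain is Gorenstein, the ``therefore Gorenstein'' clause will be automatic from factoriality; I focus on the latter, equivalently on showing $\Cl(F)=0$.

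Via the embedding~(\ref{fiber-emb}) I would identify $F$ with the $K$-subalgebra of $R$ generated by the $t_i$-minors of $X_{t_i}(a_i)$ for $i=1,\dots,p$. The conditions $t_1<\cdots<t_p$ and $a_1\ge\cdots\ge a_p$ guarantee that all these minors live inside the coordinate ring of the flag variety of $K^{n-a_p+1}$, which is factorial by the classical invariant-theoretic argument of \cite[p.~138]{Ful}: it is realized as the ring of $U$-invariants of a polynomial ring under a maximal unipotent subgroup $U\subset \GL_{n-a_p+1}(K)$, and rings of invariants of connected unipotent group actions on factorial affine $K$-algebras are themselves factorial.

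Building on this, I would show that $F$ is of the same form --- that is, the subalgebra of the flag-variety coordinate ring generated by our particular selection of minors coincides with the ring of invariants of a \emph{larger} connected unipotent group acting on a suitable polynomial extension of $R$. The hypothesis that the $t_i$ are pairwise distinct and the bounds $a_i$ are weakly decreasing with $i$ is precisely what is needed to match ``selected $t_i$-minors of $X_{t_i}(a_i)$'' with ``highest-weight vectors for a specific unipotent subgroup''. Factoriality of $F$ then follows from the same general principle, and combined with the already-established Cohen--Macaulay property this yields the Gorenstein conclusion.

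The main obstacle will be the precise invariant-theoretic identification, i.e., exhibiting the enlarged connected unipotent group whose ring of invariants on the appropriate polynomial ring is exactly $F$. An alternative route that avoids invariant theory is to compute $\Cl(\ini(F))$ directly: the initial algebra is a normal affine semigroup ring whose defining cone is described by the inequalities underlying Theorem \ref{in-powwer-primary}, and a facet count should force the class group to vanish. Transferring this vanishing to $F$ through the Sagbi degeneration of $F$ to $\ini(F)$ then requires a class-group semi-continuity statement, which would be the delicate technical point of this alternative approach.
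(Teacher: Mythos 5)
Your reduction to factoriality is fine (Cohen--Macaulayness and normality of $F$ are recorded in Remark \ref{final}(b), and ``factorial $+$ CM $\Rightarrow$ Gorenstein'' for an affine $K$-algebra is Murthy's theorem \cite[3.3.19]{BH}), but the factoriality argument itself has a genuine gap in both of the routes you sketch. For the invariant-theoretic route, the step you defer --- exhibiting a connected unipotent group whose full ring of invariants is exactly $F$ --- is not a technicality that can be expected to go through: in multidegree $e_i$ the space you need to cut out is $\bigwedge^{t_i}W_i$ with $W_i=\langle e_{a_i},\dots,e_n\rangle$, and these are fixed spaces of \emph{different} unipotent radicals as $i$ varies. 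The unipotent radical of the parabolic stabilizing the flag $W_1\subset\dots\subset W_p$ fixes, in $\bigwedge^{t_i}K^n$, the Levi-submodule generated by the highest weight vector, which is $\bigwedge^{t_i}W_1$ (the \emph{smallest} space), not $\bigwedge^{t_i}W_i$; no single subgroup of $\GL_n$ produces the required multidegree-dependent family of fixed spaces, so $F$ is not visibly a $U$-invariant ring. Being a subalgebra of the (factorial) flag coordinate ring of course proves nothing by itself. The alternative route via $\Cl(\ini(F))$ fails outright: the initial (Sagbi) algebra is in general \emph{not} factorial --- already for the Pl\"ucker algebra of $G(2,4)$ one has $\ini([13])\ini([24])=\ini([14])\ini([23])$, a Segre-type relation forcing $\Cl(\ini(F))\ne 0$ --- and there is no semicontinuity transferring triviality of the class group from the toric degeneration back to $F$ in the direction you would need.

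For comparison, the paper's proof is elementary and goes the other way around: it does not quote factoriality of the flag coordinate ring but reproves it as a special case. First one reduces to a full chain $t_i=i$, $i=1,\dots,n$, by realizing $F$ as a retract of a larger multi-fiber ring $G$ (retracts of factorial rings are factorial). Then one shows, using the NE straightening law for pure NE-tableaux, that $x_{1n}$ is a \emph{prime element} of $F$, applies Nagata's theorem to pass to $F[x_{1n}^{-1}]$, and identifies the latter (via the localization argument of Lemma \ref{prime}, using the linear syzygies of the minors) as a multi-fiber ring of the same type with all minor sizes dropped by one, over a Laurent polynomial ring; induction finishes the proof. If you want to salvage your plan, this ``prime element plus Nagata'' induction is the missing idea; the invariant-theoretic identification would, as far as I can see, require a genuinely new construction.
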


\begin{proof}
Set $F=F(I_1,\dots, I_p)$. In the first step we reduce the claim to the special case in which $p=n$ and $t_i=i$ for $i=1,\dots,n$. Starting from the given data, we augment $X$ to have at least $n$ rows. Changing the indeterminates for the embedding of $F$ into a polynomial ring over $R$, we can assume that $F=K[(I_{t_i}(a_i))_{t_i}T_{t_i}]$. Then we let $G$ be the multi-fiber ring defined by $(1,b_1),\dots,(n,b_n)$ where $b_i=a_1$ if $i<t_1$, $b_i=a_j$ if $t_j\le i < t_{j+1}$, $b_j=a_t$ for $j> t_p$. Consider the $R$-endomorphism $\Phi$ of $R[T_1,\dots,T_n]$ that maps all indeterminates $T_{t_i}$ to themselves and the other $T_j$ to $0$. Then $\Phi$ is the identity on $F$ and maps $G$ onto $F$. Thus $F$ is a retract of $G$. Since retracts of factorial rings are factorial, it is enough to consider $G$, and  we have reduced the general claim to the special case in which $p=n$ and $t_i=i$ for $i=1,\dots,n$. Moreover, we can use the embedding \eqref{fiber-emb} to simplify notation.

Using the NE straightening law  for pure (!) NE-tableaux one sees that $x_{1n}$ is a prime element in $F$. By the theorem of Gauß-Nagata, the passage to $F[x_{1n}^{-1}]$ does not affect factoriality. 

We repeat the localization argument of the proof of Theorem \ref{Rees-Gor}. Note that the linear syzygies of the $t$-minors in $I_t(a_t)$ with coefficients $x_{1i}$ are polynomial equations of the algebra generators of $F$ since $a_1\ge a_j$ for $j=1,\dots,n$. It follows that $F[x_{1n}^{-1}]$ is a multi-fiber ring defined by minors of sizes $1,\dots,n-1$ over a Laurent polynomial ring. This does not harm us since we can replace $K$ by a factorial ring of coefficients right from the start. This concludes the proof that $F$ is factorial. As we will remark in \ref{final}, $F$ is a Cohen-Macaulay domain, so we may conclude it is Gorenstein by virtue of Murthy's theorem \cite[3.3.19]{BH}. 
\end{proof}

Note that the theorem covers the flag variety coordinate ring for which all the bounds $a_i$ are equal to $1$.

\begin{remark}
\label{final}
(a) In general the multi-fiber ring  $F(I_{t_1}(a_1),\dots,I_{t_p}(a_p))$ is not factorial. For example, for $t+2\leq n$ factoriality fails for  $F(I_t(1), I_t(2))$  because  of the Segre-type relations $(fT_1)(gT_2)=(gT_1)(fT_2)$   for distinct $t$-minors $f,g$ in  $I_t(2)$. 

(b) The multi-fiber ring $F(I_{t_1}(a_1),\dots,I_{t_p}(a_p))$ is  a Cohen-Macaulay normal domain  for every $(t_1,a_1),\dots,(t_p,a_p)$, as can be seen via deformation to the initial algebras. 

(c)  In general $F(I_{t_1}(a_1),\dots,I_{t_p}(a_p))$  is not  Gorenstein, for example  $F(I_1(1),I_1(2))$  is not Gorenstein when $n\geq 4$.  On the other hand, there is strong experimental evidence that the multi-fiber rings defined by sequences $(t_1,a_1),\dots,(t_p,a_p)$ as in Theorem \ref{Rees-Gor} are Gorenstein.  In the case in which the $t_i$ are all equal, say equal to $t$,  this is clearly true because the initial algebra of $F(I_t(1),I_t(2),\dots, I_t(n+1-t))$ coincides with  the initial algebra of the coordinate ring  of the Grassmannian $G(t+1,n+1)$. 
\end{remark}


\begin{thebibliography}{99.}

\bibitem{Cocoa}	
J. Abbott, A.M. Bigatti and G. Lagorio,
\emph{CoCoA-5: a system for doing {C}omputations in {C}ommutative {A}lgebra.}
Available at \texttt{http://cocoa.dima.unige.it}.

\bibitem{ABW} K.~Akin, D.~Buchsbaum and J.~Weyman,
\emph{ Resolutions of determinantal ideals: the submaximal minors.}
Adv. Math. \textbf {39} (1981), 1-30.
	
\bibitem{BBC} A.~Berget, W.~Bruns and A.~Conca,
\emph{ Ideals generated by superstandard tableaux.} In: D. Eisenbud et al. (Eds.),
Commutative Algebra and Noncommutative Algebraic Geometry. MSRI Publications \textbf{67}, Cambridge University Press 2015, pp. 43-62.

\bibitem{BZ} D.~Bernstein nd A.~Zelevinsky,
\emph{Combinatorics of maximal minors.} 
J. Algebr. Comb.~\textbf {2} (1993), 111-121. 

\bibitem{B} S.~Blum, 
\emph{ Subalgebras of bigraded Koszul algebras.} 
J.~Algebra \textbf {242} (2001), 795-809. 
 
\bibitem{BC1} W.~Bruns and A.~Conca, 
\emph{ KRS and powers of determinantal ideals.}
Compos. Math.~\textbf {111} (1998), 111-122.
 
\bibitem{BC2} W.~Bruns and  A.~Conca, 
 \emph{ Gr\"obner bases and determinantal ideals.} 
In: J. Herzog und V.Vuletescu (Eds.), Commutative Algebra,
Singularities and Computer Algebra. Kluwer 2003, pp. 9-66.

\bibitem{BC3} W. Bruns and A. Conca. 
\emph{ KRS and determinantal rings.} 
In:  J.~Herzog, G.~Restuccia (Eds.), 
Geometric and  combinatorial aspects of commutative algebra. 
Lecture Notes in Pure and Applied Mathematics {\bf 217}. M. Dekker 2001, pp. 67-87.
 
\bibitem{BCV} W.~Bruns, A.~Conca and M.~Varbaro, 
\emph{ Maximal minors and Linear Powers.}
J. Reine Angew. Math. {\bf 702} (2015), 41-53.
 
\bibitem{BH} W.~Bruns and  J.~Herzog, 
\emph{ Cohen-Macaulay rings,} 
Rev.~ed. Cambridge Studies in Advanced Mathematics {\bf 39},
Cambridge University Press 1998.

\bibitem{Nmz} W. Bruns, B. Ichim, R. Sieg, T. R\"omer and C. S\"oger:
Normaliz. Algorithms for rational cones and affine monoids.
Available at \texttt{http://normaliz.uos.de/normaliz}.

\bibitem{BV} W.~Bruns and U.~Vetter,
\emph{ Determinantal rings.}
Lect. Notes Math. {\bf 1327}, Springer 1988.

\bibitem{Co4} A.~Conca,
\emph{Gr\"obner bases of powers of ideals of maximal minors.} 
J. Pure Appl. Algebra \textbf {121 } (1997), 223-231.
 
\bibitem{CDG} 
A.~Conca, E.~De Negri and E.~Gorla,
\emph{ Universal Gr\"obner bases for maximal minors.}
Int. Math. Res. Not. \textbf {11} (2015), 3245-3262.

\bibitem{CHV}
A.~Conca, J.~Herzog and G.~Valla,
\emph{  Sagbi bases with applications to blow-up algebras.}
J. Reine Angew.~Math. \textbf {474} (1996), 113-138.

\bibitem{EH}
D. Eisenbud, C. Huneke
\emph{ Cohen-Macaulay Rees algebras and their specializations.}
J. Algebra \textbf {81} (1983), 202-224.

\bibitem{DGPS}
W. Decker, G.-M. Greuel, G. Pfister and H. Sch{\"o}nemann,
\emph{Singular {4-0-2} --- {A} computer algebra system for polynomial computations.}
Available at \texttt{http://www.singular.uni-kl.de}. 
 
\bibitem{DEP} C.~De Concini, D.~Eisenbud and C.~Procesi, 
\emph{Young diagrams and determinantal varieties.}
Invent. Math. \textbf {56} (1980), 129-165.
 
\bibitem{F} W.~Fulton,  
\emph{Flags, Schubert polynomials, degeneracy loci, and determinantal formulas.}
 Duke Math.~J.~\textbf{65} (1992), 381-420.
 
\bibitem{Ful} W.~Fulton, \emph{Young tableaux.} London Math. Soc. Student texts \textbf{35},
Cambridge University Press 1997. 
 
\bibitem{M2} D.~Grayson and M.~Stillman,
\emph{Macaulay2, a software system for research in algebraic geometry.}
Available at {http://www.math.uiuc.edu/Macaulay2/}.

\bibitem{HV} J.~Herzog and W.~Vasconcelos,
\emph{ On the Divisor Class Group of Rees-Algebras.} 
J.~Algebra \textbf{93} (1985), 182-188. 

\bibitem{KM} A.~Knutson and E.~Miller, 
\emph{ Gr\"obner geometry of Schubert polynomials.}  
Ann. Math. \textbf{161}  (2005), 1245-1318.

\bibitem{MS} E.~Miller and B.~Sturmfels, 
\emph{Combinatorial commutative algebra.}  Graduate Texts in Mathematics, \textbf{227}. 
Springer-Verlag 2005.

\bibitem{SS} D.~Speyer and B.~Sturmfels, 
\emph{The tropical Grassmannian.} 
Adv. Geom. \textbf{4} (2004) 389-411.

\bibitem{SZ} B.~Sturmfels and A.~Zelevinsky, 
\emph{Maximal minors and their leading terms.} 
Adv. Math. \textbf{98} (1993) 65-112.

\end{thebibliography}
\end{document}